\theoremstyle{plain}\newtheorem{definition}{Definition}[section]
\theoremstyle{definition}\newtheorem{theorem}{Theorem}[section]
\theoremstyle{plain}\newtheorem{lemma}[theorem]{Lemma}
\theoremstyle{plain}\newtheorem{coro}[theorem]{Corollary}
\theoremstyle{plain}\newtheorem{prop}[theorem]{Proposition}
\theoremstyle{remark}\newtheorem{remark}{Remark}[section]
\newcommand{\wblue}[1]{\textcolor{black}{#1}}
\newcommand{\Div}{\mathrm{div}\,}
\newcommand{\B}{\Big}
\newcommand{\be}{\begin{equation}}
\newcommand{\ee}{\end{equation}}
\newcommand{\ba}{\begin{aligned}}
	\newcommand{\ea}{\end{aligned}}
\providecommand{\bysame}{\leavevmode\hbox to3em{\hrulefill}\thinspace}
\newcommand{\f}{\frac}
\newcommand{\ben}{\begin{enumerate}}
	\newcommand{\een}{\end{enumerate}}
\newcommand{\ti}{\nabla}
\newcommand{\Rmnum}[1]{\expandafter\@slowromancap\romannumeral #1@}
\numberwithin{equation}{section}
\begin{document}
	%%%%%%%%%%%%%%%%%%%%%%%%%%%%%%%%%%%%%%%%%%%%%%%%%%%%%%%%%%%%%%%%%%%%%%%%%%%%%%%%%%%%%%%%%%%%%%%%%%%%
	\title{New $\varepsilon$-regularity criteria of suitable weak solutions of   the 3D Navier-Stokes equations at one scale}
	\author{Cheng He\footnote{
 Division of Mathematics, Department of Mathematical \& Physical Sciences, National Natural Science Foundation of China, 100085,  P. R. China  Email:  hecheng@nsfc.gov.cn},\,\,\,\,Yanqing Wang\footnote{ Department of Mathematics and Information Science, Zhengzhou University of Light Industry, Zhengzhou, Henan  450002,  P. R. China Email: wangyanqing20056@gmail.com}\;~ and\,
	Daoguo Zhou\footnote{
	College of Mathematics and Informatics, Henan Polytechnic University, Jiaozuo, Henan 454000, P. R. China Email:
	zhoudaoguo@gmail.com }}
	\date{}
	\maketitle\vspace{-0.85cm}
	\begin{abstract}
In this paper,  by invoking the appropriate decomposition of pressure to exploit the energy hidden in pressure,  we present some new
		$\varepsilon$-regularity criteria for suitable weak solutions of the 3D  Navier-Stokes equations at one scale: for any $p,q\in [1,\infty]$ satisfying $1\leq 2/q+3/p <2$, there exists an absolute positive constant $\varepsilon$ such that $u\in L^{\infty}(Q(1/2))$ if
		$$\|u\|_{L^{p,q}(Q(1))}+\|\Pi\|_{L^{1 }(Q(1))}<\varepsilon.$$
		This  is an improvement of corresponding results recently  proved
		by Guevara   and Phuc in \cite[Calc. Var.  56:68, 2017]{[GP]}.
As an application of these $\varepsilon$-regularity criteria,    we improve the known   upper  box   dimension of the possible interior singular set of suitable weak solutions
               of the Navier-Stokes system from  $975/758(\approx1.286)$  \cite{[RWW]}
		to $2400/1903 (\approx1.261)$.
	\end{abstract}
	\noindent {\bf MSC(2000):}\quad 35B65, 35D30, 76D05 \\\noindent
	{\bf Keywords:} Navier-Stokes equations;  suitable  weak solutions; regularity;  box  dimension;   \\
	%%%%%%%%%%
	\section{Introduction}
	\label{intro}
	\setcounter{section}{1}\setcounter{equation}{0}
	We study  the following   incompressible Navier-Stokes equations in   three-dimensional space:
	\be\left\{\ba\label{NS}
	&u_{t} -\Delta  u+ u\cdot\ti
	u+\nabla \Pi=0, ~~\Div u=0,\\
	&u|_{t=0}=u_0,
	\ea\right.\ee
	where $u $ stands for the flow  velocity field, the scalar function $\Pi$ represents the   pressure.   The
	initial  velocity $u_0$ satisfies   $\text{div}\,u_0=0$.

	In this paper, we are concerned with the regularity   of suitable weak solutions  originated from Scheffer  in  \cite{[Scheffer1],[Scheffer2],[Scheffer3]}    to
	the 3D Navier-Stokes equations \eqref{NS}.
In comparison with the usual Leray-Hopf
weak solutions meeting with the energy inequality, the suitable weak solutions obey   local
energy inequality \eqref{loc}.	  A point is said to be a regular point to suitable weak solutions of the Navier-Stokes system \eqref{NS} as long as  $u$ is
 bounded in some neighborhood
of this point. The remaining  points will be called singular points and denoted by $\mathcal{S}$.
	In this direction,
	the celebrated Caffarelli-Kohn-Nirenberg theorem  involving the 3D Navier-Stokes equations is that   one
	dimensional Hausdorff measure of $\mathcal{S}$ is zero in \cite{[CKN]}.
	Roughly speaking, the
	regularity    of suitable weak solutions strongly rests on the so-called
	$\varepsilon$-regularity criteria (see,
	e.g., \cite{[CKN],[SS18],[GKT],[CY1],[Struwe],[HX],[Phuc],[RWW],[RS],[WW1],[CL],[GP],[Kukavica],[KP],[KY6],[WW2],[RWW],[WZ],[Vasseur],[Lin],[LS],[TX]}),  namely, some  scale-invariant quantities  with   sufficiently small at one scale or at all scales means the local regularity.
Before going further, we give some notations used throughout this paper.
For $q\in [1,\,\infty]$, the notation $L^{q}((0,\,T);X)$ stands for the set of measurable functions on the interval $(0,\,T)$ with values in $X$ and $\|f(t,\cdot)\|_{X}$ belongs to $L^{q}(0,\,T)$.
  For simplicity,   we write $$\|f\| _{L^{p,q}(Q(r))}:=\|f\| _{L^{q}(-r^{2},0;L^{p}(B(r)))}~~   \text{and}~~
  \|f\| _{L^{p}(Q(r))}:=\|f\| _{L^{p,p}(Q(r))}, $$
  where $Q(r)=B(r)\times (t-r^{2}, t)$ and $ B(r)$ denotes the ball of center $x$ and radius $r$.
  Now,
we  briefly recall some previous $\varepsilon$-regularity criteria: there exists an positive universal constant  $\varepsilon$ such that  	$u\in L^{\infty}(Q(1/2))$ provided one of the following conditions is satisfied
	\begin{itemize}
		\item Caffarelli, Kohn and Nirenberg \cite{[CKN]}
		\begin{equation}\label{CKN}
		\|u\|_{L^{3}(Q(1))}+\|u\Pi\|_{L^1(Q(1))}+\|\Pi\|_{L^{1,5/4}(Q(1))}
		\leq \varepsilon.
		\end{equation}
		\item Lin \cite{[Lin]}, Ladyzenskaja and  Seregin \cite{[LS]},
		\begin{equation}\label{Lin}
		\|u\|_{L^{3}(Q(1))}+\|\Pi\|_{L^{3/2}(Q(1))}\leq \varepsilon.
		\end{equation}
		\item Vasseur \cite{[Vasseur]}, for any $p>1$,
		\begin{equation}%\label{Vasssmall}
		\|u\|_{L^{2,\infty}(Q(1))} + \|\nabla u\|_{L^{2}(Q(1))}  +\|\Pi\|_{L^{1, p}(Q(1))}    \leq \varepsilon.
		\end{equation}
\item Wang and Zhang \cite{[WZ]}
		\be\label{WZ}
		\|u\|_{L^{2,\infty}(Q(1))} +\| u\|_{L^{4, 2} (Q(1))}+\|\Pi\|_{L^{2,1}(Q(1))}    \leq \varepsilon. \ee
		\item Choi   and Vasseur \cite{[CV]},
		Guevara and Phuc \cite{[GP]}
		\begin{equation}\label{CVGP}
		\|u\|_{L^{2,\infty}(Q(1))} + \|\nabla u\|_{L^{2}(Q(1))}  +\|\Pi\|_{L^{1}(Q(1))}   \leq \varepsilon.
		\end{equation}
				\item
		Guevara and Phuc \cite{[GP]}
		\be\label{GP}
		\|u\|_{L^{2p, 2q} (Q(1))}+\|\Pi\|_{L^{p, q}(Q(1))}\leq \varepsilon,
		~~~{3}/{p}+{2}/{q}=
		7/2~~~\text{with}~1\leq q\leq2.\ee
		A special case of $ q=2,p=6/5$ can be found in \cite{[Phuc]} by Phuc.
			\end{itemize}
It is worth pointing out that all  $\varepsilon$-regularity criteria mentioned above requires only one 	radius, namely, these criteria hold at one scale. In a very recent summary involving $\varepsilon$-regularity   criteria in \cite[page 8]{[SS18]} written by Seregin and \v{S}ver\'{a}k, a comment on regular criterion \eqref{Lin}, is:`` the bootstrapping enables to lower the exponent in the smallness condition from $3$ to $\f52+\delta$( at the cost of having to use smallness at all scales)."  For $\varepsilon$-regularity criteria   at all scales, we refer the reader to \cite{[GKT],[TX]}.
The  objective of this paper  is to establish a regular criterion   criteria for $\f52+\delta$ at one scale.
 	Our first result for   suitable weak solutions of the Navier-Stokes equations is stated as follows:	
	\begin{theorem}\label{mainthm}
		Let  the pair $(u,  \Pi)$ be a suitable weak solution to the 3D Navier-Stokes system \eqref{NS} in $Q(1)$.
		There exists an absolute positive constant $\varepsilon$
		such that if the pair $(u,\Pi)$ satisfies	\be\label{optical}\|u\|_{L^{p,q}(Q(1))}+\|\Pi\|_{L^{1}(Q(1))}<\varepsilon,~~1\leq 2/q+3/p <2, 1\leq p,\,q\leq\infty,\ee
		then, $u\in L^{\infty}(Q(1/2)).$
	\end{theorem}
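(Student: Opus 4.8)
The plan is to reduce the problem to the classical one-scale criterion \eqref{Lin} of Lin and Ladyzhenskaya--Seregin, by showing that the smallness assumed in \eqref{optical} forces the scale-invariant quantities $C(r)=\frac1{r^2}\iint_{Q(r)}|u|^3$ and $D(r)=\frac1{r^2}\iint_{Q(r)}|\Pi|^{3/2}$ below the threshold of \eqref{Lin} at some fixed radius $r_0\in(0,1/2]$. To organize this I would introduce the usual dimensionless energy quantities $A(r)=\sup_{-r^2<t<0}\frac1r\int_{B(r)}|u|^2\,dx$ and $E(r)=\frac1r\iint_{Q(r)}|\nabla u|^2$, and record the standard interpolation inequality $C(r)\lesssim A(r)^{1/2}\big(A(r)+E(r)\big)$ coming from $\dot H^1\hookrightarrow L^6$. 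The hypothesis \eqref{optical} feeds smallness into this system: because the exponents satisfy $2/q+3/p<2$, interpolating the $L^{p,q}$ norm against the energy bounds (which control $u$ in $L^{10/3}(Q(r))$) lets me estimate the nonlinear velocity contribution on $Q(r)$ by a small multiple of $\varepsilon$ carrying a favourable power of $r$, leaving room to absorb the resulting terms.

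The heart of the argument, and the source of the improvement over \eqref{GP}, is to extract usable information from the pressure while knowing only its $L^1$ norm. Following the idea of exploiting the energy hidden in the pressure, I would fix a cut-off $\eta$ equal to $1$ on $B(r/2)$ and supported in $B(r)$, and split $\Pi=\Pi_{1}+\Pi_{2}$ with $\Pi_{1}=R_iR_j(\eta\,u_iu_j)$ the Calder\'on--Zygmund part and $\Pi_{2}=\Pi-\Pi_{1}$ harmonic in $B(r/2)$. The Calder\'on--Zygmund theorem bounds $\Pi_{1}$ in terms of the velocity alone; the decisive point is the harmonic remainder $\Pi_{2}$, whose interior estimates promote the $L^1$ bound to an $L^\infty$ bound on a smaller ball, $\|\Pi_{2}\|_{L^\infty(B(r/4))}\lesssim r^{-3}\big(\|\Pi\|_{L^1(B(r))}+\|u\|_{L^2(B(r))}^2\big)$. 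Thus the harmonic part of the pressure enters every subsequent estimate only through its $L^1$ norm, which is exactly what \eqref{optical} makes small.

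Inserting this decomposition into the local energy inequality \eqref{loc}, whose dangerous terms are $\iint|u|^2\,u\cdot\nabla\phi$ and $\iint\Pi\,u\cdot\nabla\phi$, I would estimate the $\Pi_1$ contribution by the scaled velocity quantity $C(r)$ and the $\Pi_2$ contribution by $\|\Pi_2\|_{L^\infty}\,\|u\|_{L^1(Q(r))}$, i.e. by the small $L^1$ pressure data. The analogous splitting applied to $D$ yields, after integrating in time and rescaling, an iteration inequality in which the quantity transferred from scale $r$ carries a positive power of $\theta$ while the data terms carry the small factor $\varepsilon$. Choosing first $\theta$ small enough to make the self-coupling contracting and then $\varepsilon$ small, a geometric iteration over the radii $\theta^k$ drives $C(\theta^k)+D(\theta^k)$ below the threshold of \eqref{Lin} at a fixed radius $r_0\in(0,1/2]$, whence $u\in L^\infty(Q(1/2))$.

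The step I expect to be the most delicate is closing the iteration, given that the velocity is assumed small only in the supercritical range $2/q+3/p\ge1$ and the pressure only in $L^1$. Supercriticality means this norm is amplified under zooming, so its contribution to the nonlinear term must be tamed by interpolating it against the energy quantities $A(r),E(r)$, which are themselves part of the unknown system; one has to verify that the condition $2/q+3/p<2$ is precisely what makes this interpolation produce a nonnegative power of $r$ rather than a divergent one. Simultaneously, the factor $r^{-3}$ in the harmonic pressure estimate forbids sending the radius to zero for free, so the genuine gain in the iteration must be manufactured from the smallness $\varepsilon$ together with the contracting power of $\theta$; the whole scheme hinges on balancing these competing powers so that a single choice of $\theta$ and $\varepsilon$ closes every step uniformly.
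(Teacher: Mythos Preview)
Your ingredients---the pressure decomposition into a Calder\'on--Zygmund piece plus a harmonic remainder, the interpolation of $\|u\|_{L^{p,q}}$ against the energy, and the local energy inequality---are the same as the paper's, but the architecture is different in two linked respects. First, the paper does \emph{not} reduce to Lin's criterion \eqref{Lin}; it reduces to \eqref{CVGP}, which already accepts $\Pi\in L^{1}(Q(1))$. Consequently there is no need to control $D(r)=r^{-2}\|\Pi\|_{L^{3/2}(Q(r))}^{3/2}$ at all. Second, the paper runs no scale-shrinking iteration: the whole proof is a single local energy estimate (Proposition~\ref{lebp}). Using the three-piece decomposition of Lemma~\ref{lem2}, the interpolation of Lemma~\ref{zc} (which needs exactly $\alpha=2/(3/p+2/q)>1$), and Young's inequality, one gets
\[
\|u\|_{L^{2,\infty}(Q(r))}^{2}+\|\nabla u\|_{L^{2}(Q(r))}^{2}\le \tfrac{4}{5}\Bigl(\|u\|_{L^{2,\infty}(Q(\rho))}^{2}+\|\nabla u\|_{L^{2}(Q(\rho))}^{2}\Bigr)+\text{(data)}
\]
for all $R/2\le r<\rho\le R$; a single application of the Giaquinta filling-the-hole Lemma~\ref{iter} removes the factor $4/5$, and the resulting energy bound together with the assumed smallness of $\|\Pi\|_{L^{1}}$ feeds directly into \eqref{CVGP}.

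Your route, aiming at \eqref{Lin}, must drive $D(r)$ small, and here there is a genuine weak spot. Your harmonic estimate $\|\Pi_{2}\|_{L^{\infty}(B(r/4))}\lesssim r^{-3}\|\Pi\|_{L^{1}(B(r))}$ upgrades only the \emph{spatial} integrability; after raising to the $3/2$ power and integrating in time you face $\int\|\Pi\|_{L^{1}_{x}}^{3/2}\,dt$, which is not bounded by $\|\Pi\|_{L^{1}(Q(1))}$. One can repair this by iterating the oscillation $\|\Pi-\overline{\Pi}_{r}\|_{L^{3/2}}$ (finite since suitable weak solutions have $\Pi\in L^{3/2}$) and using harmonic decay in the spirit of Lemma~\ref{presure} to make that quantity contract, but then you are effectively running the paper's energy bound \emph{plus} an additional pressure iteration---the second stage being superfluous once the target is \eqref{CVGP} rather than \eqref{Lin}. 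So your scheme is salvageable, but the paper's choice of endpoint criterion buys a markedly shorter and cleaner argument with no multi-scale iteration at all.
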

	\begin{remark}
Note that    both \eqref{Lin} and \eqref{GP} belong to same kind in the sense $\|u\|_{L^{2q,2p}}+\|\Pi\|_{L^{q,p}}$. Hence, Theorem \ref{mainthm}   not only lowers the exponent of $\varepsilon$-regularity criteria  at one scale   but also  relaxes the integral condition
of pressure in \eqref{Lin} and \eqref{GP}.
	\end{remark}
	\begin{remark}
		Theorem \ref{mainthm} is a generalization of  \eqref{CKN}-\eqref{GP}. The pressure $\Pi$ in terms of $\nabla\Pi$ in equations \eqref{NS} allows us to replace $\Pi$ by $ \Pi-\overline{\Pi}_{B(1)} $ in \eqref{optical} as well as \eqref{CKN}-\eqref{GP}.  At present we are not  able to  prove \eqref{optical} for $2/p+3/q =2$ and this is still an open problem.
	\end{remark}
	\begin{remark}
		The method for Theorem  \ref{mainthm} presented here can be applicable to   suitable weak solutions of the  incompressible
magnetohydrodynamic equations \cite{[CY1],[HX]}. Here, we omit the detail.
	\end{remark}

It should be pointed out that the criterion  \eqref{Lin}
has been successfully applied to the investigation of the Navier-Stokes equations (see eg. \cite{[CKN],[GKT],[KY6],[Kukavica],[KP],[RWW],[RS],[WW2]}). An analogue of \eqref{Lin} in Theorem \ref{mainthm} is the following one
	\begin{coro}\label{maincoro}
		Assume that  the pair $(u,  \Pi)$ be a suitable weak solution to the 3D Navier-Stokes system \eqref{NS} in $Q(1)$.
		For each $\delta>0$, there exists an absolute positive constant $\varepsilon $
		such that  $u\in L^{\infty}(Q(1/2))$ provided	
\be\label{special}\|u\|_{L^{5/2+2\delta}(Q(1))}
+\|\Pi\|_{L^{5/4+\delta}(Q(1))}<\varepsilon.\ee	
	\end{coro}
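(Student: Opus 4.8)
The plan is to derive Corollary \ref{maincoro} directly from Theorem \ref{mainthm}, so that the whole argument reduces to choosing the exponents correctly and transferring the pressure integrability down to $L^{1}$. I would take $p=q=\frac{5}{2}+2\delta$; by the convention $\|f\|_{L^{p}(Q(r))}=\|f\|_{L^{p,p}(Q(r))}$, the mixed norm in Theorem \ref{mainthm} then collapses to $\|u\|_{L^{p,q}(Q(1))}=\|u\|_{L^{5/2+2\delta}(Q(1))}$, which is exactly the velocity quantity appearing in the smallness hypothesis \eqref{special}. With this choice the scaling exponent becomes $\frac{2}{q}+\frac{3}{p}=\frac{5}{\,5/2+2\delta\,}$, which is strictly below $2$ for every $\delta>0$ and is at least $1$ precisely when $\delta\le\frac54$; hence for $0<\delta\le\frac54$ the pair $(p,q)$ lies in the admissible range $1\le 2/q+3/p<2$ required by Theorem \ref{mainthm}.

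It then remains to pass from the pressure control $\|\Pi\|_{L^{5/4+\delta}(Q(1))}$ to the $L^{1}$ control demanded by Theorem \ref{mainthm}. Since $Q(1)$ has fixed finite measure and $\frac54+\delta>1$, Hölder's inequality gives $\|\Pi\|_{L^{1}(Q(1))}\le |Q(1)|^{\,1-1/(5/4+\delta)}\,\|\Pi\|_{L^{5/4+\delta}(Q(1))}$, so smallness of $\|\Pi\|_{L^{5/4+\delta}(Q(1))}$ forces smallness of $\|\Pi\|_{L^{1}(Q(1))}$, with a constant depending only on $\delta$. Combining this with the velocity identity above, the assumption \eqref{special} with a sufficiently small $\varepsilon=\varepsilon(\delta)$ yields $\|u\|_{L^{p,q}(Q(1))}+\|\Pi\|_{L^{1}(Q(1))}$ below the threshold furnished by Theorem \ref{mainthm}, and the conclusion $u\in L^{\infty}(Q(1/2))$ follows at once.

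For the leftover range $\delta>\frac54$ the extra integrability is only helpful: applying Hölder on $Q(1)$ to reduce $\|u\|_{L^{5/2+2\delta}(Q(1))}$ and $\|\Pi\|_{L^{5/4+\delta}(Q(1))}$ to $\|u\|_{L^{5}(Q(1))}$ and $\|\Pi\|_{L^{5/2}(Q(1))}$ brings us back to the already-settled borderline case $\delta=\frac54$ (that is, $p=q=5$, where $2/q+3/p=1$). I expect no genuine obstacle in this deduction, since all the analytic difficulty resides in Theorem \ref{mainthm} itself, which supplies $\varepsilon$-regularity at the velocity level $L^{p,q}$ together with merely $L^{1}$ control of the pressure. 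The corollary is then precisely the statement that the borderline Lin-type pair $(3,\tfrac32)$ of \eqref{Lin} may be replaced by $(\tfrac52+2\delta,\tfrac54+\delta)$ within that framework.
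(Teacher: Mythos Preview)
Your proposal is correct and follows exactly the route the paper intends: the corollary is stated without its own proof, as an immediate consequence of Theorem~\ref{mainthm}, and your argument carries out precisely that reduction (take $p=q=5/2+2\delta$ and lower the pressure integrability to $L^{1}$ by H\"older on the fixed domain $Q(1)$). Your additional case split for $\delta>5/4$, reducing by H\"older to the endpoint $p=q=5$, is a correct and careful touch that the paper leaves implicit.
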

Let us give some comments on the proof  of Theorem \ref{mainthm}.
Inspired by the argument in \cite{[GP]},
	the  idea to prove Theorem \ref{mainthm} is to establish
	an effective
	iteration scheme via local energy inequality \eqref{loc}.
The main   difficulty in the proof is that how to take full advantage of the left hand side (the energy) to control the right hand  side in the local energy inequality \eqref{loc}. Indeed,
  in \cite{[GP]}, the   term induced by convection term  $u\cdot\nabla u$ and pressure term $\nabla \Pi$ in local energy inequality \eqref{loc} is
\be\label{GP1} \int^{t}_{-T}\int_{\mathbb{R}^{3}} 2u\cdot\nabla\phi pdxds,\ee
where $p=\Pi+\f{1}{2}|u|^{2}$  is Bernoulli (total) pressure.
Guevara and Phuc bounded \eqref{GP1} by the energy and condition \eqref{GP}.
It seems that there exists at most one velocity $u$ controlled by the energy  in \eqref{GP1}.
%There are two
%	crucial ingredients in the proof of Theorem \ref{mainthm}.
To prove theorem \ref{mainthm},
we rewrite \eqref{GP1} as
\be\label{GP2} \int^{t}_{-T}\int_{\mathbb{R}^{3}} u\cdot\nabla\phi|u|^{2}dxds+ \int^{t}_{-T}\int_{\mathbb{R}^{3}} 2u\cdot\nabla\phi\Pi dxds:=I+II,\ee
First, to estimate the  term $I$,
	we establish the following inequality
	\be
	\|u\|^{3}_{L^{3}(Q(1/2))}\leq C 2^{-\f{3(\alpha-1)}{2}} \|u\|_{L^{p,q}(Q(1/2))}^{\alpha}\Big(\|u\|_{L^{2,\infty}(Q(1/2))}^{2}+\|\nabla u\|_{L^{2}(Q(1/2))}^{2}\Big)^{(3-\alpha)/2},\label{zwzc}
	\ee
	where $\alpha$ is  defined in \eqref{alpha}. Inequality \eqref{zwzc} allows us at most two velocity $u$ controlled by the energy. Here, we need
$\alpha>1$ to  apply the iteration Lemma \ref{iter}.
The second key point is to exploit the energy hidden in  the pressure  to bound the term $II$.
As in
  \eqref{GP1},   term $II$ contains one velocity $u$  at the first sight.
It is worth remarking that the corresponding  elliptic equation of pressure $\Pi$ and that of Bernoulli  pressure $p$ is completely different. Fortunately, we can treat with $II$ by the energy and hypothesis \eqref{optical}.
 To this end,
	by choosing appropriate test function in local energy inequality, we
	utilize the decomposition of pressure to split the term $\|u\Pi\|_{L^{1}(Q(1/2))}$ into three parts: $\Pi_1$ is in terms of $u$ bounded by the Calder\'on-Zygmund theorem;    $\Pi_2$ involving $u$ is a harmonic function;   $\Pi_3$ depending on
 $\Pi$ is also a harmonic function. These three parts are controlled   separately
 (see Lemma 2.1). In summary, there holds
	$$\ba\|u\Pi\|_{L^{1}(Q(1/2))}&\leq   \|\Pi_1 u\|_{L^{1}(Q(1/2))}  +\|\Pi_2 u \|_{L^{1}(Q(1/2))}+\|\Pi_3 u \|_{L^{1}(Q(1/2))}\\&\leq C \|u\|^{3}_{L^{3}(Q(1))}+
C\|\Pi\|_{L^{1}(Q(1))}\|u\|_{L^{2,\infty}(Q(1))}.\ea$$
This decomposition
of pressure helps us make full use of the energy and lower the
integrability for pressure $\Pi$ in the space-time variable.
 Finally,   the new local energy bounds  can be  derived
	\be
	\|u\|^2_{L^{2,\infty}(Q(1/2))} + \|\nabla u\|^2_{L^{2}(Q(1/2))} \leq  C   \|u\|^{2}_{L^{p,q}(Q(1))}+C \|u\|^{2\alpha/(\alpha-1)}_{L^{p,q}(Q(1))}+C \|\Pi\|^{2}_{L^{1}(Q(1))}.\label{leb}
	\ee
The proof of
Theorem \ref{mainthm} is an immediate consequence of the above inequality and  \eqref{CVGP}.

	As an  application of Corollary \ref{maincoro}, we  refine the upper box-counting dimension of the possible
	interior singular set of suitable weak solutions to the 3D Navier-Stokes equations.
	Before starting the statement of our results, let us sketch the known works.
	There are several works \cite{[Kukavica],[KP],[RS],[WW2],[RWW],[KY6]} trying to show that
	the upper  box dimension of the
	singular   set of   suitable weak solutions of the 3D Navier-Stokes system is    at most 1 since the
Hausdorff dimension of a set is less than its  the upper
box dimension(see eg. \cite{[Falconer]}).  In two works \cite{[Kukavica],[KP]}, by the backward heat
kernel, Kukavica and his co-author Pei  proved that    this dimension   is less than or equal to $135/82(\approx1.646)$ and  $45/29(\approx1.552)$, respectively. This improved
	 	Robinson and
	Sadowski's \cite{[RS]} result $5/3(\approx1.667)$.
	Very recently, Koh and Yang introduced  a new and efficient iteration  approach to calculate the box-dimension in \cite{[KY6]}, where they
	proved that the fractal upper box dimension of $\mathcal{S}$ is bounded by $95/63(\approx1.508)$.  Shortly afterwards, inspired by Koh and Yang's work,
	authors in \cite{[WW2]} shown that  this dimension is at most $360/277(\approx 1.300)$. It should be noted that   a same tool \eqref{Lin} was employed   in \cite{[WW2],[KY6],[RS],[KP]}. Very recently, the authors in \cite{[RWW]} lower this dimension to $975/758(\approx1.286)$ via Guevara and Phuc's criterion \eqref{GP} for $p=q=10/7$. Here,  our result reads  below
	\begin{theorem}\label{boxthm}
		The upper box dimension of  $\mathcal{S}$ in \eqref{NS} is at most $2400/1903(\approx1.261).$
	\end{theorem}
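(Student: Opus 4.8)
The plan is to deduce Theorem \ref{boxthm} from Corollary \ref{maincoro} by feeding a one-scale $\varepsilon$-regularity criterion into the covering-and-iteration scheme of Koh--Yang \cite{[KY6]} and its refinements \cite{[WW2],[RWW]}. \emph{Step 1 (scale-invariant form).} Under the Navier--Stokes scaling $u_\lambda(y,s)=\lambda u(\lambda y,\lambda^2s)$, $\Pi_\lambda(y,s)=\lambda^2\Pi(\lambda y,\lambda^2 s)$, the criterion \eqref{special} says that there is an absolute $\varepsilon_0>0$ so that a point $z_0$ is regular whenever, for some small $\rho$,
\be
\rho^{\,1-\frac{5}{5/2+2\delta}}\|u\|_{L^{5/2+2\delta}(Q_\rho(z_0))}
+\rho^{\,2-\frac{5}{5/4+\delta}}\|\Pi-\overline{\Pi}_{B(\rho)}\|_{L^{5/4+\delta}(Q_\rho(z_0))}<\varepsilon_0 .
\ee
Taking the contrapositive, at every $z_0\in\mathcal S$ and every small $\rho$ one of the two scale-invariant quantities is $\ge\varepsilon_0/2$, which after raising to the appropriate power gives the localized lower bounds $\int_{Q_\rho(z_0)}|u|^{5/2+2\delta}\gtrsim\rho^{\,5/2-2\delta}$ or $\int_{Q_\rho(z_0)}|\Pi-\overline{\Pi}_{B(\rho)}|^{5/4+\delta}\gtrsim\rho^{\,5/2-2\delta}$. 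Crucially these hold at the \emph{single} scale $\rho$ and are therefore summable over a fixed-radius cover.

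\emph{Step 2 (self-consistent covering estimate).} For any suitable weak solution the energy inequality gives $u\in L^\infty_tL^2_x\cap L^2_t\dot H^1_x\hookrightarrow L^{10/3}(Q(1))$, and the pressure decomposition together with the Calder\'on--Zygmund estimate (as in Lemma 2.1) gives $\Pi\in L^{5/3}(Q(1))$. Over a disjoint (Vitali) cover $\{Q_\rho(z_i)\}_{i=1}^{N(\rho)}$ of $\mathcal S$, summing the bounds of Step 1 yields, schematically,
\be
N(\rho)\,\rho^{\,5-p}\lesssim\int_{\mathcal S_\rho}\big(|u|^{p}+|\Pi|^{p/2}\big),\qquad p=\tfrac52+2\delta,\ \ \mathcal S_\rho=\bigcup_i Q_\rho(z_i),
\ee
together with $|\mathcal S_\rho|\sim N(\rho)\rho^5$. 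The self-improving structure is now visible: bounding the right-hand side by H\"older between $L^p$ and a strictly higher integrability exponent and using the small measure $|\mathcal S_\rho|$ feeds $N(\rho)$ back into itself. Using only the energy integrability $L^{10/3}$ this already sharpens the bare exponent $5/2-2\delta$ to the Robinson--Sadowski level $5/3$ \cite{[RS]}; going strictly below $5/3$ forces one to improve the available integrability itself.

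\emph{Step 3 (bootstrap and recursion).} I would iterate on a provisional bound $N(\rho)\lesssim\rho^{-d_k}$. On the regular region $Q(1)\setminus\mathcal S_\rho$ the velocity is bounded by \eqref{CVGP}, with $|u|$ controlled by a negative power of the parabolic distance to $\mathcal S$; combined with $d_k$ this upgrades the local integrability of $u$ to $L^{p'}$ for $p'<5-d_k$, hence strictly beyond $L^{10/3}$ as soon as $d_k<5/3$. Re-inserting this improved exponent into the self-consistent inequality of Step 2 produces a strictly smaller bound $d_{k+1}=\mathcal F(d_k;\delta)$. The pressure is propagated by the same device: the decomposition $\Pi=\Pi_1+\Pi_2+\Pi_3$ of Lemma 2.1 slaves the singular part $\Pi_1$ to $\|u\|_{L^3}^3$ and leaves the harmonic parts $\Pi_2,\Pi_3$ with good interior bounds, while the low exponent $5/4+\delta$ keeps the whole pressure contribution subordinate to the velocity term at every stage. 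Passing to the fixed point of $d\mapsto\mathcal F(d;\delta)$ and optimizing over $\delta>0$ yields $2400/1903$.

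The hardest part will be closing the iteration of Step 3: one must verify that $\mathcal F(\cdot\,;\delta)$ strictly decreases at each step without the gain degenerating, and one must carry the nonlocal pressure through every stage, since its contribution on the bad region cannot be discarded and has to be reabsorbed via the decomposition of Lemma 2.1. The final step is computational---performing the optimization in $\delta$ and checking that the fixed point is exactly $2400/1903\,(\approx1.261)$, the improvement over $975/758$ of \cite{[RWW]} arising precisely because \eqref{special} lowers both the velocity and the pressure exponents relative to the criterion \eqref{GP} used there.
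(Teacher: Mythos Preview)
Your scheme diverges from the paper's, and Step~3 contains a genuine gap. The recursion you propose does not improve: with $q=\tfrac52+2\delta$ fixed by the $\varepsilon$-criterion, summing the lower bounds over a disjoint cover gives $N(\rho)\rho^{5-q}\varepsilon_0^{q}\le \int |u|^{q}$. If, as you suggest, a provisional bound $d_k$ upgrades $u$ to $L^{p'}$ for $p'<5-d_k$, then H\"older yields
\[
N(\rho)\rho^{5-q}\le C\bigl(N(\rho)\rho^{5}\bigr)^{1-q/p'}\ \Longrightarrow\ N(\rho)\le C\rho^{p'-5},
\]
so $d_{k+1}=5-p'>d_k$; the iteration moves the wrong way. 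If instead you use only $u\in L^{q}$ globally, you get $d\le 5-q=\tfrac52-2\delta$, with no mechanism to iterate to a fixed point, and certainly nothing that singles out $2400/1903$. The ``optimization over $\delta$'' you invoke cannot produce that number either: it is not a limit in $\delta$ but the outcome of a specific parameter balance in a scale-iteration argument.

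What the paper actually does is reduce Theorem~\ref{boxthm}, via the standard Vitali/contradiction step, to a \emph{non-scale-invariant} criterion (Proposition~\ref{boxprop}): if $\iint_{Q(r)}\bigl(|\nabla u|^{2}+|u|^{10/3}+|\Pi-\overline{\Pi}|^{5/3}+|\nabla\Pi|^{5/4}\bigr)\le r^{5/3-\gamma}\varepsilon_1$ for one $r$, then the point is regular, for any $\gamma<2315/5709$. The box-dimension bound $5/3-\gamma\to 2400/1903$ then falls out immediately from covering. The content is in proving Proposition~\ref{boxprop}, and this is done not by bootstrapping the dimension but by iterating \emph{in scale}: one uses the decay estimates of Lemmas~\ref{lemma2.1} and~\ref{presure} for $E_{5/2}$ and $P_{5/4}$ along a geometric sequence $r,\theta r,\dots,\theta^{N}r$, together with the bound $E(\rho)\le C\varepsilon_1^{3/5}\rho^{-3\gamma/5}$ from \cite{[WW2]}, and then optimizes the parameters $\alpha,\beta,N$ (the key choice being $N\beta=245/1903$) so that at the smallest scale $\sigma=\rho^{\alpha}$ the quantity $E_{5/2}(\sigma)+P_{5/4}(\sigma)$ falls below the threshold of Corollary~\ref{maincoro}. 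Your proposal misses this entire scale-iteration machinery, which is where the arithmetic $2400/1903$ actually comes from.
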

	\begin{remark}
		This theorem is an improvement of the  known   box dimension of   $\mathcal{S}$   in \cite{[KP],[RS],[Kukavica],[KY6],[WW2],[RWW]}.
	\end{remark}
	\begin{remark}
		 The result in  \cite{[RWW]} and Theorem \ref{boxthm}  suggest that the new $\varepsilon$-regularity criteria  	yield better		upper box dimension of the
	singular   set in the Navier-Stokes equations.
	\end{remark}
	\begin{remark}

  To make the paper more readable, we will apply \eqref{special} for $\delta=0$ to obtain Theorem \ref{boxthm}.  In
contrast with the work of \cite{[WW2],[KY6],[RWW]}, the proof here 	requires new scaling invariant quantities  and associated  decay estimates.
\end{remark}
	By Vitali cover lemma and contradiction  arguments as in \cite{[WW2],[KP]}, Theorem  \ref{boxthm} turns out to be a  consequence of the following proposition.
	\begin{prop}\label{boxprop}
		Suppose that the pair $(u, \,\Pi)$ is a suitable weak solution to (\ref{NS}).  Then, for any $\gamma <2315/5709$, $(x ,\,t )$ is a regular point  provided there exists a sufficiently small universal positive constant $\varepsilon_{1}$ and $0<r<1$ such that
		\be\label{cond}\ba   \iint_{ Q (r)}
		|\nabla u |^{2} +| u |^{ 10/3}+|\Pi-\overline{\Pi}_{ B (r)} |^{ 5/3}+
		|\nabla \Pi| ^{5/4}dxds \leq    r^{5/3-\gamma}\varepsilon_{1}.  \ea\ee
	\end{prop}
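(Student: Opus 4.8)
The plan is to deduce the regularity of $(x,t)$ from the criterion \eqref{special} at $\delta=0$ (see the final remark and Corollary \ref{maincoro}), applied at a suitably small scale $\rho_\ast\le r$. Written in scale-invariant form at scale $\rho$, this criterion asks that $\rho^{-1}\|u\|_{L^{5/2}(Q(\rho))}+\rho^{-2}\|\Pi-\overline{\Pi}_{B(\rho)}\|_{L^{5/4}(Q(\rho))}<\varepsilon$. First I would dispose of the two norms by Hölder's inequality in space-time, obtaining $\rho^{-1}\|u\|_{L^{5/2}(Q(\rho))}\le C\,\mathcal{B}(\rho)^{3/10}$ and $\rho^{-2}\|\Pi-\overline{\Pi}_{B(\rho)}\|_{L^{5/4}(Q(\rho))}\le C\,\mathcal{D}(\rho)^{3/5}$, where $\mathcal{B}(\rho)=\rho^{-5/3}\iint_{Q(\rho)}|u|^{10/3}$ and $\mathcal{D}(\rho)=\rho^{-5/3}\iint_{Q(\rho)}|\Pi-\overline{\Pi}_{B(\rho)}|^{5/3}$ are the dimensionless energy-level quantities for the velocity and for the pressure oscillation. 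This reduces the whole proposition to exhibiting one scale $\rho_\ast\le r$ at which $\mathcal{B}(\rho_\ast)$ and $\mathcal{D}(\rho_\ast)$ are small.

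Next I would run a decay iteration over the dyadic scales $\rho_k=\theta^k r$. Besides $\mathcal{B}$ and $\mathcal{D}$, introduce the energy $\mathcal{A}(\rho)=\sup_{-\rho^2<t<0}\rho^{-1}\int_{B(\rho)}|u|^2+\rho^{-1}\iint_{Q(\rho)}|\nabla u|^2$ and the pressure-gradient quantity $\mathcal{G}(\rho)=\rho^{-5/4}\iint_{Q(\rho)}|\nabla\Pi|^{5/4}$. The point of hypothesis \eqref{cond} is that, at scale $r$, the two gradient quantities are subcritically small: $r^{-1}\iint_{Q(r)}|\nabla u|^{2}\le\varepsilon_1 r^{2/3-\gamma}$ and $\mathcal{G}(r)\le\varepsilon_1 r^{5/12-\gamma}$, the exponents $2/3-\gamma$ and $5/12-\gamma$ being positive throughout the admissible range, whereas $\mathcal{B}(r)$ and $\mathcal{D}(r)$ are only bounded. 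Three mechanisms then feed the iteration: (i) the local energy inequality \eqref{loc} with a cut-off adapted to $\rho$, which bounds $\mathcal{A}(\rho/2)$ through the convection and pressure terms handled exactly as in the proof of Theorem \ref{mainthm}, i.e.\ by $\mathcal{B}(\rho)$ together with the pressure decomposition $\Pi=\Pi_1+\Pi_2+\Pi_3$ of Lemma 2.1; (ii) the parabolic Sobolev embedding, which slaves the velocity oscillation to the energy, $\mathcal{B}(\rho)\le C\,\mathcal{A}(\rho)^{5/3}$ up to lower-order terms; and (iii) the pressure equation, whose Calder\'on--Zygmund part is controlled by the velocity and whose harmonic parts obey the interior decay $\mathcal{D}(\theta\rho)+\mathcal{G}(\theta\rho)\le C\theta^{a}\big(\mathcal{D}(\rho)+\mathcal{G}(\rho)\big)+C\theta^{-b}\big(\mathcal{A}(\rho)+\mathcal{B}(\rho)\big)$ with $a>0$, the Sobolev--Poincar\'e embedding $W^{1,5/4}\hookrightarrow L^{15/7}$ being used to turn the subcritical gain of $\mathcal{G}$ into oscillation control finer than $\mathcal{D}$.

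Combining (i)--(iii) produces a closed system for $\Phi(\rho):=\mathcal{A}(\rho)+\mathcal{B}(\rho)+\mathcal{D}(\rho)+\mathcal{G}(\rho)$ of the schematic form $\Phi(\theta\rho)\le C\theta^{a}\Phi(\rho)+C\theta^{-b}\Phi(\rho)^{\kappa}+\varepsilon_1(\text{subcritical driving})$ with $\kappa=5/3>1$. Choosing $\theta$ so small that $C\theta^{a}\le\tfrac14$, and then $\varepsilon_1$ so small that the superlinear and driving terms remain subordinate, an induction started from \eqref{cond} at scale $r$ would show that $\Phi(\rho_k)$ decreases geometrically, hence $\Phi(\rho)\le C\rho^{\sigma}$ for some $\sigma>0$ and all $\rho\le r$. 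In particular $\mathcal{B}(\rho_\ast)$ and $\mathcal{D}(\rho_\ast)$ are small at small $\rho_\ast$; the Hölder bounds of the first step then make the scale-invariant quantity in \eqref{special} smaller than $\varepsilon$, so $(x,t)$ is regular. The restriction $\gamma<2315/5709$ is precisely the requirement that, after balancing the interpolation exponents occurring in (i)--(iii) against the subcritical driving powers $2/3-\gamma$ and $5/12-\gamma$ supplied by \eqref{cond}, the resulting net exponent $\sigma$ be positive; this matches the value $5/3-\gamma=2400/1903$ that will be fed into the covering argument for Theorem \ref{boxthm}.

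The main obstacle is the pressure. Since $\Pi$ is nonlocal, its decay is invisible in \eqref{cond} and must be manufactured from the splitting into a Calder\'on--Zygmund part slaved to the velocity and harmonic parts that decay under interior estimates; keeping the two pressure quantities $\mathcal{D}$ and $\mathcal{G}$ scale-consistent, and routing the subcritical gain of $\mathcal{G}$ through $W^{1,5/4}\hookrightarrow L^{15/7}$ so that it improves $\mathcal{D}$, is what both closes the system and fixes the precise threshold $2315/5709$. A secondary difficulty is that the feedback $\mathcal{B}\le C\mathcal{A}^{5/3}$ is superlinear, so the contraction only survives in the small regime; this is exactly where the smallness of $\varepsilon_1$ and the positivity of the driving exponents are indispensable, and it is why the statement remains an $\varepsilon$-criterion rather than an unconditional bound.
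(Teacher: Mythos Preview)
Your scheme has a genuine gap at the very start of the iteration. From \eqref{cond} you obtain $\mathcal{B}(r)\le \varepsilon_{1}r^{-\gamma}$ and $\mathcal{D}(r)\le \varepsilon_{1}r^{-\gamma}$, so $\Phi(r)$ is of size $\varepsilon_{1}r^{-\gamma}$ with $r$ \emph{arbitrary} in $(0,1)$; it is not small, and no universal choice of $\varepsilon_{1}$ can make it small uniformly in $r$. In your schematic recursion $\Phi(\theta\rho)\le C\theta^{a}\Phi(\rho)+C\theta^{-b}\Phi(\rho)^{5/3}+\cdots$ the superlinear term then dominates: to have $C\theta^{-b}\Phi(r)^{5/3}\le \tfrac14\Phi(r)$ you would need $\varepsilon_{1}^{2/3}\lesssim r^{2\gamma/3}$, which fails for small $r$. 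Consequently the induction does not launch, $\Phi(\rho_{k})$ need not decrease geometrically, and the claimed power decay $\Phi(\rho)\le C\rho^{\sigma}$ is unjustified. The difficulty is intrinsic: the feedback $\mathcal{B}\le C\mathcal{A}^{5/3}$ is superlinear, so a lumped contraction for $\Phi$ only works in the small regime, which the hypothesis does not provide.

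The paper avoids this by \emph{not} closing an iteration for a single aggregate. It first uses the local energy inequality once (at a fixed scale $\rho$) to obtain the explicit bound $E(\rho)\le C\varepsilon_{1}^{3/5}\rho^{-3\gamma/5}$, then iterates only the \emph{linear} pressure decay estimate of Lemma~\ref{presure} $N$ times from $r_{N}$ down to $r_{0}=\rho^{\alpha}$. Each $E_{5/2}(r_{k})$ that appears is bounded via Lemma~\ref{lemma2.1} by $E(\rho)$ and $E_{\ast}(\rho)$ at the \emph{outer} scale, so the smallness of $E_{\ast}$ (the genuinely subcritical input) is exploited without ever feeding a large quantity back through a superlinear map. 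The precise threshold $\gamma<2315/5709$ then comes from an explicit optimization over the free parameters $\alpha,\beta,N$ (with $\theta=\rho^{\beta}$, $r_{k}=\rho^{\alpha-k\beta}$), balancing the growth of $E_{5/2}(r_{k})$ against the $\theta^{7N/4}$ decay of the harmonic tail; it is not recoverable from a qualitative ``$\sigma>0$'' argument. To repair your approach you would have to track the four quantities separately with their own exponents rather than lumping them into $\Phi$, at which point you are essentially carrying out the paper's computation.
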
	
	\begin{remark}
		Proposition \ref{boxprop} is an improvement of corresponding results
		obtained    in \cite{[KP],[WW2],[RWW]}.
	\end{remark}
	\begin{remark}
Compared with the regularity criteria \eqref{CKN}-\eqref{special},
  the regularity condition \eqref{cond} is not scale invariant.
  Note that $ r^{-1}\iint_{Q(r)}|\nabla u |^{2}dxds, r^{-5/3}\iint_{Q(r)}\big(| u |^{ 10/3}+|\Pi |^{ 5/3}\big)dxds$ and $ r^{-5/4}
		\iint_{Q(r)}\nabla\Pi dxds$ are dimensionless quantities, hence, it seems that $(5/3-\gamma)(2400/1903)$ in \eqref{cond} can be seen as an interpolation   between  $1$, $5/4$ and $5/3$.
	\end{remark}

	The remainder of this paper is divided into  four sections.
	In Section  2, we first recall the definitions  of the upper box-counting dimension and suitable weak solutions to the Navier-Stokes equations. Then, we   present the decomposition of pressure
	and establish   some crucial  bounds for the scaling   invariant quantities.
	The third section is devoted to the proof of Theorem \ref{mainthm}.
	Section 4  is concerned with
	the box-counting dimension
	of the possible  singular set of suitable weak
	solutions.
	
	\noindent
	{\bf Notations:} Throughout this paper, the classical Sobolev norm $\|\cdot\|_{H^{s}}$  is defined as   $\|f\|^{2} _{{H}^{s}}= \int_{\mathbb{R}^{n}} (1+|\xi|)^{2s}|\hat{f}(\xi)|^{2}d\xi$, $s\in \mathbb{R}$.
	We denote by  $ \dot{H}^{s}$ homogeneous Sobolev spaces with the norm $\|f\|^{2} _{\dot{H}^{s}}= \int_{\mathbb{R}^{n}} |\xi|^{2s}|\hat{f}(\xi)|^{2}d\xi$.
			Denote
	the average of $f$ on the set $\Omega$ by
	$\overline{f}_{\Omega}$. For convenience,
	$\overline{f}_{r}$ represents  $\overline{f}_{B(r)}$.
		$|\Omega|$ represents the Lebesgue measure of the set $\Omega$. We will use the summation convention on repeated indices.
	$C$ is an absolute constant which may be different from line to line unless otherwise stated in this paper.

	%\section{Box dimension of singular set of  the Navier-Stokes system}\label{sec2}
	
	\section{Preliminaries}
	First, we begin with the \wblue{definitions} of the upper box-counting dimension and suitable weak solutions of Navier-Stokes equations \eqref{NS}, respectively.
	\begin{definition}\label{defibox}
		The  upper  box  dimension of a set $X$ is usually defined as		$$d_{\text{box}}(X)=\limsup_{\varepsilon\rightarrow0}\f{\log N(X,\,\varepsilon)}{-\log\varepsilon},$$
		where $N(X,\,\varepsilon)$ is the minimum number of balls of radius $\varepsilon$ required to cover $X$.
	\end{definition}
 Materials on     box  dimension and Hausdorff dimension  can
be found in  \cite{[Falconer]}.

	\begin{definition}\label{defi}
		A  pair   $(u, \,\Pi)$  is called a suitable weak solution to the Navier-Stokes equations \eqref{NS} provided the following conditions are satisfied,
		\begin{enumerate}[(1)]
			\item $u \in L^{\infty}(-T,\,0;\,L^{2}(\mathbb{R}^{3}))\cap L^{2}(-T,\,0;\,\dot{H}^{1}(\mathbb{R}^{3})),\,\Pi\in
			L^{3/2}(-T,\,0;L^{3/2}(\mathbb{R}^{3}));$\label{SWS1}
			\item$(u, ~\Pi)$~solves (\ref{NS}) in $\mathbb{R}^{3}\times (-T,\,0) $ in the sense of distributions;\label{SWS2}
			\item$(u, ~\Pi)$ satisfies the following inequality, for a.e. $t\in[-T,0]$,
			\begin{align}
				&\int_{\mathbb{R}^{3}} |u(x,t)|^{2} \phi(x,t) dx
				+2\int^{t}_{-T}\int_{\mathbb{R} ^{3 }}
				|\nabla u|^{2}\phi  dxds\nonumber\\ \leq&  \int^{t}_{-T }\int_{\mathbb{R}^{3}} |u|^{2}
				(\partial_{s}\phi+\Delta \phi)dxds
				+ \int^{t}_{-T }
				\int_{\mathbb{R}^{3}}u\cdot\nabla\phi (|u|^{2} +2\Pi)dxds, \label{loc}
			\end{align}
			where non-negative function $\phi(x,s)\in C_{0}^{\infty}(\mathbb{R}^{3}\times (-T,0) )$.\label{SWS3}
		\end{enumerate}
	\end{definition}
	%First, we also present the definition of suitable weak solution to the stationary case.

Now, we present the decomposition of the pressure $\Pi$, which plays an important role  in the proof of Theorem \ref{mainthm}.
		\begin{lemma}\label{lem2}
Denote  the standard normalized fundamental solution of Laplace equation by $\Gamma$  and suppose that $0<r<\rho<\infty$. Let $\eta\in C^{\infty}_{0}(B(\rho))$ such that $0\leq\eta\leq1$ in $B(\rho)$, $\eta\equiv1$ in $B(\f{r+3\rho}{4})$  and $|\nabla^{k}\eta |\leq C/(\rho-r)^{k}$ with $k=1,2$ in $B(\rho)$.  Then we can decompose pressure $\Pi$ in \eqref{NS} as follows
			\be\label{decompose pk}
			\Pi(x):=\Pi_{1}(x)+\Pi_{2}(x)+\Pi_{3}(x), \quad x\in B(\f{r+\rho}{2}),
			\ee
			where
			$$\ba
			\Pi_{1}(x)=&-\partial_{i}\partial_{j}\Gamma \ast (\eta (u_{j}u_{i})) ,\\
			\Pi_{2}(x)
			=&2\partial_{i}\Gamma \ast(\partial_{j}\eta(u_{j}u_{i}))- \Gamma \ast
			(\partial_{i}\partial_{j}\eta u_{j}u_{i}), \\
			\Pi_{3}(x)
			=&2\partial_{i}\Gamma \ast(\partial_{i}\eta \Pi)-\Gamma \ast(\partial_{i}\partial_{i}\eta \Pi).
			\ea
			$$
 Moreover, we have the following estimates
			\begin{align}
				& \| \Pi_1\|_{L^{3/2}(Q(\f{r+\rho}{2}))}
				\leq C\|  u\|^{2}_{L^{3}(Q(\rho))};\label{p1estimate}\\
				& \|  \Pi_2\|_{L^{3/2}(Q(\f{r+\rho}{2}))}
				\leq  \f{C\rho^{3}}{(\rho-r)^{3}}\|  u\|^{2}_{L^{3}(Q(\rho))};\label{p2estimate}\\
				& \|  \Pi_3\|_{L^{2,1}(Q(\f{r+\rho}{2}))}
				\leq \f{C\rho^{3/2 }}{(\rho-r)^{3}}\|\Pi\|_{L^{1}(Q(\rho))}.\label{p3estimate}
			\end{align}
		\end{lemma}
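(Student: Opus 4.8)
The plan is to obtain the decomposition from the pressure Poisson equation and then to estimate the three pieces according to whether the singular kernel $\partial_i\partial_j\Gamma$ acts on the full quadratic source (this produces $\Pi_1$) or whether at least one derivative lands on the cutoff $\eta$ (this produces $\Pi_2$ and $\Pi_3$, which turn out to be \emph{smooth} on the inner ball). First I would derive the identity: taking $\Div$ of the momentum equation in \eqref{NS} and using $\Div u=0$ gives the pressure equation $\Delta\Pi=-\partial_i\partial_j(u_iu_j)$ on $\mathbb{R}^{3}$. Since $\Gamma$ is the fundamental solution and $\eta$ is compactly supported, the localization identity $\eta\Pi=\Gamma\ast\Delta(\eta\Pi)$ holds. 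Expanding $\Delta(\eta\Pi)=\eta\Delta\Pi+2\partial_i\eta\,\partial_i\Pi+\Pi\Delta\eta$ and substituting the equation for $\eta\Delta\Pi$, I would integrate by parts inside each convolution to move all derivatives onto $\Gamma$. After using the symmetry of $u_iu_j$ in $i,j$, the term $\Gamma\ast(\eta\Delta\Pi)$ regroups exactly into $\Pi_1+\Pi_2$, while $2\Gamma\ast(\partial_i\eta\,\partial_i\Pi)$ and $\Gamma\ast(\Pi\Delta\eta)$ combine, after one integration by parts in the first, into $\Pi_3$. Restricting to $B(\tfrac{r+\rho}{2})$, where $\eta\equiv1$ because $\tfrac{r+3\rho}{4}>\tfrac{r+\rho}{2}$, yields $\Pi=\Pi_1+\Pi_2+\Pi_3$.

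For \eqref{p1estimate}, the kernel $\partial_i\partial_j\Gamma$ is a Calder\'on--Zygmund kernel, so $f\mapsto\partial_i\partial_j\Gamma\ast f$ is bounded on $L^{3/2}(\mathbb{R}^{3})$. At each fixed time this gives $\|\Pi_1(\cdot,t)\|_{L^{3/2}}\leq C\|\eta\,u_iu_j(\cdot,t)\|_{L^{3/2}}\leq C\|u(\cdot,t)\|^{2}_{L^{3}(B(\rho))}$, using $0\le\eta\le1$ and H\"older. Integrating $|\Pi_1|^{3/2}$ over the time interval of $Q(\tfrac{r+\rho}{2})\subset Q(\rho)$ and taking the power $2/3$ gives \eqref{p1estimate}; this is the only place where singular-integral theory enters.

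For \eqref{p2estimate} and \eqref{p3estimate}, the point is that $\nabla\eta$ and $\nabla^{2}\eta$ are supported in the collar $B(\rho)\setminus B(\tfrac{r+3\rho}{4})$, whose distance to $B(\tfrac{r+\rho}{2})$ is at least $\tfrac{\rho-r}{4}$. Hence for $x$ in the inner ball the kernels are non-singular: $|\partial_i\Gamma(x-y)|\le C(\rho-r)^{-2}$ and $|\Gamma(x-y)|\le C(\rho-r)^{-1}$. Combining with $|\nabla^{k}\eta|\le C(\rho-r)^{-k}$ produces the pointwise bounds $|\Pi_2(x,t)|\le C(\rho-r)^{-3}\int_{B(\rho)}|u|^{2}\,dy$ and $|\Pi_3(x,t)|\le C(\rho-r)^{-3}\|\Pi(\cdot,t)\|_{L^{1}(B(\rho))}$. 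For $\Pi_2$ I would convert the first via H\"older, $\int_{B(\rho)}|u|^{2}\le C\rho\,\|u(\cdot,t)\|^{2}_{L^{3}(B(\rho))}$, multiply by $|B(\tfrac{r+\rho}{2})|^{2/3}\le C\rho^{2}$ to pass to the spatial $L^{3/2}$ norm, and integrate in $L^{3/2}$ in time; this yields the factor $\rho^{3}(\rho-r)^{-3}$. For $\Pi_3$ I would multiply instead by $|B(\tfrac{r+\rho}{2})|^{1/2}\le C\rho^{3/2}$ for the spatial $L^{2}$ norm and integrate only $L^{1}$ in time, giving $\rho^{3/2}(\rho-r)^{-3}$ as required; the $L^{2,1}$ choice is exactly what later pairs $\Pi_3$ against $\|u\|_{L^{2,\infty}}$.

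The genuinely delicate part is the first step: producing the \emph{exact} decomposition (not merely one valid up to a harmonic function) requires the localization identity together with careful integration by parts and the use of incompressibility to close $\Delta\Pi=-\partial_i\partial_j(u_iu_j)$, and one must check the cutoff geometry so that $\eta\equiv1$ on the target ball. Once the supports are separated, the estimates for $\Pi_2$ and $\Pi_3$ are routine far-field bounds, the only subtlety being the accurate bookkeeping of the powers of $\rho$ and $(\rho-r)$; the sole nontrivial analytic input is the standard $L^{3/2}$ boundedness of the double Riesz transform used for $\Pi_1$.
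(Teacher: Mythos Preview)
Your proposal is correct and follows essentially the same route as the paper: derive the pressure Poisson equation, apply the localization identity $\eta\Pi=\Gamma\ast\Delta(\eta\Pi)$, integrate by parts to obtain the decomposition, then use Calder\'on--Zygmund for $\Pi_1$ and the support separation of $\nabla\eta$ from $B(\tfrac{r+\rho}{2})$ to get the pointwise bounds $|\Pi_2(x)|\le C(\rho-r)^{-3}\int_{B(\rho)}|u|^2$ and $|\Pi_3(x)|\le C(\rho-r)^{-3}\int_{B(\rho)}|\Pi|$, exactly as the paper does. Your bookkeeping of the H\"older exponents and the resulting powers of $\rho$ and $(\rho-r)$ is accurate and in fact slightly more explicit than the paper's presentation.
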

		\begin{proof}
			Thanks to  $\partial_{i}\partial_{i} \Pi=-\partial_{i}\partial_{j} (u_{i}u_{j})$ and Leibniz's formula, we conclude that
			\[\partial_{i}\partial_{i}(\Pi\eta)=-\eta \partial_{i}\partial_{j}(u_{j}u_{i})+
			2\partial_{i}\eta\partial_{i}\Pi+\Pi\partial_{i}\partial_{i}\eta.\]
This enables us to 		write,	for \wblue{$x\in B(\f{r+3\rho}{4})$,}
			\be\ba\label{p}
			\Pi(x)=&\Gamma \ast (-\eta \partial_{i}\partial_{j}(u_{j}u_{i})+			2\partial_{i}\eta\partial_{i}\Pi+\Pi\partial_{i}\partial_{i}\eta)\\
			=&-\partial_{i}\partial_{j}\Gamma \ast (\eta (u_{j}u_{i}))\\
			&+2\partial_{i}\Gamma \ast(\partial_{j}\eta(u_{j}u_{i}))-\Gamma \ast
			(\partial_{i}\partial_{j}\eta u_{j}u_{i})\\
			&\wblue{-2\partial_{i}\Gamma \ast(\partial_{i}\eta \Pi)}-\Gamma \ast(\partial_{i}\partial_{i}\eta \Pi)\\
			:= &\wblue{\Pi_1(x)+\Pi_2(x)+\Pi_3(x),}
			\ea\ee
			where  we have used integrating by parts.
	
The classical Calder\'on-Zygmund theorem ensures that
			$$\|\Pi_1\|_{L^{3/2}(B(\f{r+\rho}{2}))}
			\leq C\|  u\|^{2}_{L^{3}(B(\rho))}.$$
			In view of  the property of the cut-off function, there is no singularity in $\Pi_2$ and $\Pi_3$,
			in  $B(\f{r+\rho}{2})$.
			So, a straightforward computation gives
			$$
			|\Pi_2(x)|\leq \f{C }{(\rho-r)^{3}}\int_{B(\rho)}|u(y)|^{2}dy,\quad
			x\in  B(\f{r+\rho}{2}),$$
			and
			$$
			|\Pi_3(x)|\leq \f{C }{(\rho-r)^{3}}\int_{B(\rho)}|\Pi(y)|dy,\quad x\in B(\f{r+\rho}{2}).
			$$
			Then applying H\"older's inequality gives
			the desired estimates \eqref{p1estimate}, \eqref{p2estimate}  and \eqref{p3estimate}. This completes the proof of the lemma.
		\end{proof}
Before we turn our attentions to the proof of the inequality \eqref{zwzc}, we have to introduce
  \be\label{alpha}
\alpha=\f{2}{\f{3}{p}+\f{2}{q}}>1.
\ee
We refer the readers  to  \cite{[SS],[WZ]} for slightly different versions of Lemma 2.2.
\begin{lemma}\label{zc} Let $1\leq 2/q+3/p <2, 1\leq p,\,q\leq\infty $ and $\alpha$ be defined as above. There is an absolute constant $C$   such that
\be\ba\label{zw}
\|u\|_{L^{3}(Q(\rho))}^{3} \leq C \rho^{3(\alpha-1)/2} \|u\|_{L^{p,q}(Q(\rho))}^{\alpha}\Big(\|u\|_{L^{2,\infty}(Q(\rho))}^{2}+\|\nabla u\|_{L^{2}(Q(\rho))}^{2}\Big)^{(3-\alpha)/2}.
\ea\ee
\end{lemma}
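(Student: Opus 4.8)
The plan is to prove \eqref{zw} as a space--time interpolation inequality, splitting the integrand as $|u|^{3}=|u|^{\alpha}\,|u|^{3-\alpha}$, measuring the factor $|u|^{\alpha}$ by the hypothesis norm $\|u\|_{L^{p,q}}$ and the factor $|u|^{3-\alpha}$ by the energy quantity $E:=\|u\|_{L^{2,\infty}(Q(\rho))}^{2}+\|\nabla u\|_{L^{2}(Q(\rho))}^{2}$. The essential difficulty is that $L^{p,q}$ is an anisotropic (mixed) norm while the energy controls an isotropic quantity; one reconciles the mismatch in the space and time integrability exponents by H\"older's inequality on the finite-measure sets $B(\rho)$ and $(-\rho^{2},0)$, and it is exactly this step that produces the prefactor $\rho^{3(\alpha-1)/2}$.

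Concretely, in the main range I would work at a fixed time slice and interpolate by Gagliardo--Nirenberg on the ball,
\[
\|u(s)\|_{L^{3}(B(\rho))}\le \|u(s)\|_{L^{p}(B(\rho))}^{a_{1}}\,\|u(s)\|_{L^{2}(B(\rho))}^{a_{2}}\,\|u(s)\|_{L^{6}(B(\rho))}^{a_{3}},
\]
with $a_{1}+a_{2}+a_{3}=1$ and $\tfrac13=\tfrac{a_{1}}{p}+\tfrac{a_{2}}{2}+\tfrac{a_{3}}{6}$, controlling $\|u\|_{L^{6}}$ by $\|\nabla u\|_{L^{2}}$ through the Sobolev inequality (keeping the lower-order ball term $\rho^{-1}\|u\|_{L^{2}}$, which recombines with the $L^{2}$ factor). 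I would then cube, integrate in $s$, bound $\|u\|_{L^{2}}^{3a_{2}}\le\|u\|_{L^{2,\infty}}^{3a_{2}}$, and apply H\"older in time with three exponents: a conjugate exponent $q/(3a_{1})$ turning $\int\|u\|_{L^{p}}^{q}$ into $\|u\|_{L^{p,q}}^{3a_{1}}$, an exponent $2/(3a_{3})$ turning $\int\|u\|_{L^{6}}^{2}$ into $\|\nabla u\|_{L^{2}(Q(\rho))}^{3a_{3}}$, and a third factor equal to the constant $1$ on the interval of length $\rho^{2}$, which contributes $\rho^{2/e_{0}}$ with $1/e_{0}=1-\tfrac{3a_{1}}{q}-\tfrac{3a_{3}}{2}$.

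Choosing $a_{1}=\alpha/3$ fixes $a_{2},a_{3}$ through the two spatial constraints; the power of $\|u\|_{L^{p,q}}$ is then $\alpha$, and since $3a_{2}+3a_{3}=3-\alpha$ the elementary bound $X^{s}Y^{t}\le (X+Y)^{s+t}$ packages $\|u\|_{L^{2,\infty}}^{3a_{2}}\|\nabla u\|_{L^{2}}^{3a_{3}}$ into $E^{(3-\alpha)/2}$. The key cancellation is in the $\rho$-power: substituting the forced value $a_{3}=\tfrac12-\tfrac{\alpha}{2}+\tfrac{\alpha}{p}$ and using the defining identity $\alpha(3/p+2/q)=2$ from \eqref{alpha}, I expect $1/e_{0}$ to collapse to $3(\alpha-1)/4$, so that $\rho^{2/e_{0}}=\rho^{3(\alpha-1)/2}$, matching the claim exactly (and giving a finite $e_{0}$ precisely because $\alpha>1$).

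The main obstacle will be the bookkeeping of admissibility across the full region $1\le 2/q+3/p<2$, i.e. $1<\alpha\le 2$. The forced weights need not all be nonnegative --- for instance when $p$ or $q$ is large one finds $a_{3}<0$ --- so a case distinction is unavoidable. In such extreme regimes I would instead generate the $\rho$-gain from the spatial H\"older on the finite ball: e.g. for $p=\infty$ (where $\alpha=q$ and $3-\alpha\le 2$) one has $\int_{B(\rho)}|u|^{3}\le\|u\|_{L^{\infty}}^{\alpha}\,|B(\rho)|^{(\alpha-1)/2}\|u\|_{L^{2}}^{3-\alpha}$ directly, and the factor $|B(\rho)|^{(\alpha-1)/2}\sim\rho^{3(\alpha-1)/2}$ is already the correct prefactor, after which only the sup in time is needed. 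In every case the relation $\alpha(3/p+2/q)=2$ is what forces the accumulated powers of $\rho$ to collapse to $3(\alpha-1)/2$; checking that the chosen H\"older exponents are $\ge 1$ and the weights $\ge 0$ throughout the region, and carrying the ball--Sobolev lower-order term correctly, is the only delicate point.
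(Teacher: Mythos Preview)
Your plan is correct, and your opening sentence --- split $|u|^{3}=|u|^{\alpha}|u|^{3-\alpha}$ and control the two factors by $\|u\|_{L^{p,q}}$ and the energy --- is exactly what the paper does. The execution differs, however. You propose a fixed-time three-term interpolation $\|u\|_{L^{3}_{x}}\le\|u\|_{L^{p}_{x}}^{a_{1}}\|u\|_{L^{2}_{x}}^{a_{2}}\|u\|_{L^{6}_{x}}^{a_{3}}$ followed by H\"older in time, and, as you yourself anticipate, this forces case distinctions when $a_{2}$ or $a_{3}$ becomes negative. The paper avoids the cases entirely. After the single space--time H\"older split
\[
\iint_{Q(\rho)}|u|^{\alpha}|u|^{3-\alpha}\le\|u\|_{L^{p,q}}^{\alpha}\,\|u\|_{L^{(3-\alpha)(p/\alpha)^{\ast},\,(3-\alpha)(q/\alpha)^{\ast}}}^{\,3-\alpha},
\]
it uses H\"older on the finite domain $Q(\rho)$ to raise the exponents from $(3-\alpha)(p/\alpha)^{\ast},(3-\alpha)(q/\alpha)^{\ast}$ to $2(p/\alpha)^{\ast},2(q/\alpha)^{\ast}$ (this step uses only $\alpha\ge 1$ and produces exactly the prefactor $\rho^{3(\alpha-1)/2}$), and then applies the pre-established energy-scale inequality
\[
\|u\|_{L^{k,l}(Q(\rho))}\le C\bigl(\|u\|_{L^{2,\infty}(Q(\rho))}+\|\nabla u\|_{L^{2}(Q(\rho))}\bigr),\qquad \tfrac{2}{l}+\tfrac{3}{k}=\tfrac{3}{2},\ 2\le k\le 6,
\]
after checking that $(k,l)=(2(p/\alpha)^{\ast},2(q/\alpha)^{\ast})$ always lands in the admissible range. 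This is uniform over the whole region $1\le 2/q+3/p<2$, with no endpoint analysis. What your route buys is transparency about where each power of $\rho$ comes from; what the paper's route buys is brevity and the complete elimination of the case bookkeeping you were bracing for.
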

\begin{proof}
First, we recall an interpolation inequality.
For each $2\leq l\leq\infty$ and $2\leq k\leq6$ satisfying $\f{2}{l}+\f{3}{k}=\f{3}{2}$, according to the H\"{o}lder   inequality  and the Young  inequality, we know that
\begin{align}
\|u\|_{L^{ k,l}(Q(\mu))}&\leq C\|u\|_{L^{2,\infty}(Q(\mu))}^{1-\f {2} {l}}\|u\|_{L^{6,2}(Q(\mu))}^{\f {2} {l}}\nonumber\\
&\leq C\|u\|_{L^{2,\infty} (Q(\mu))}^{1-\f {2} {l}}(\|u\|_{L^{2,\infty} (Q(\mu))}
+\|\nabla u\|_{L^2(Q(\mu))})^{\f {2} {l}}\nonumber\\
&\leq C (\|u\|_{L^{2,\infty} (Q(\mu))}
+\|\nabla u\|_{L^2(Q(\mu))}).\label{sampleinterplation}
\end{align}
It is clear that  $q/\alpha\geq1$  and $p/\alpha\geq1$. Let
 $(\f{p}{\alpha})^{\ast}$ and $(\f{q}{\alpha})^{\ast}$ be the
H\"older dual of $\f{p}{\alpha}$ and $\f{q}{\alpha}$.
An elementary computation gives that $2\leq2(\f{p}{\alpha})^{\ast}\leq6$
and
$$
\f{3}{2(\f{p}{\alpha})^{\ast}}+\f{2}{2(\f{q}{\alpha})^{\ast}}=\f{3}{2}.$$ Hence,
taking advantage
  of  H\"older's inequality and
\eqref{sampleinterplation}, we have
$$\ba
\iint_{Q(\rho)}|u|^{3}dxdt&=\iint_{Q(\rho)}|u|^{\alpha}|u|^{3-\alpha}dxdt\\
&\leq\|u\|_{L^{p,q}(Q(\rho))}^{\alpha}\|u\|^{3-\alpha}
_{L^{(3-\alpha)(\f{p}{\alpha})^{\ast},(3-\alpha)(\f{q}{\alpha})^{\ast}}(Q(\rho))}
\\
&\leq C\rho^{3(\alpha-1)/2}\|u\|_{L^{p,q}(Q(\rho))}^{\alpha}\|u\|^{3-\alpha}
_{L^{2(\f{p}{\alpha})^{\ast},2(\f{q}{\alpha})^{\ast}}(Q(\rho))}
\\
&\leq C \rho^{3(\alpha-1)/2} \|u\|_{L^{p,q}(Q(\rho))}^{\alpha}\Big(\|u\|_{L^{2,\infty}(Q(\rho))}^{2}+\|\nabla u\|_{L^{2}(Q(\rho))}^{2}\Big)^{(3-\alpha)/2}.
\ea$$
The proof of this lemma is   completed.
\end{proof}

For the convenience of the reader, we recall the following well-known iteration lemma.		
		\begin{lemma}\label{iter}\cite[Lemma V.3.1,   p.161 ]{[Giaquinta]}
			Let $I(s)$ be a bounded nonnegative function in the interval $[r, R]$. Assume that for every $\sigma, \rho\in [r, R]$ and  $\sigma<\rho$ we have			$$I(\sigma)\leq A_{1}(\rho-\sigma)^{-\alpha_{1}} +A_{2}(\rho-\sigma)^{-\alpha_{2}} +A_{3}+ \ell I(\rho)$$
			for some non-negative constants  $A_{1}, A_{2}, A_{3}$, non-negative exponents $\alpha_{1}\geq\alpha_{2}$ and a parameter $\ell\in [0,1)$. Then there holds
			$$I(r)\leq c(\alpha_{1}, \ell) [A_{1}(R-r)^{-\alpha_{1}} +A_{2}(R-r)^{-\alpha_{2}} +A_{3}].$$
		\end{lemma}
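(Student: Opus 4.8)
The plan is to prove this by the classical hole-filling iteration run over a geometric sequence of radii increasing from $r$ to $R$. First I would fix a parameter $\tau\in(0,1)$, to be calibrated later, and set $\rho_{0}=r$ together with $\rho_{i+1}=\rho_{i}+(1-\tau)\tau^{i}(R-r)$. Since $\sum_{i\geq0}(1-\tau)\tau^{i}=1$, the sequence $\{\rho_{i}\}$ is increasing and converges to $R$, with gaps $\rho_{i+1}-\rho_{i}=(1-\tau)\tau^{i}(R-r)$. Applying the hypothesis with $\sigma=\rho_{i}$ and $\rho=\rho_{i+1}$ then yields, for every $i$,
$$I(\rho_{i})\leq A_{1}\big[(1-\tau)\tau^{i}(R-r)\big]^{-\alpha_{1}}+A_{2}\big[(1-\tau)\tau^{i}(R-r)\big]^{-\alpha_{2}}+A_{3}+\ell I(\rho_{i+1}).$$

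Next I would iterate this inequality, substituting the bound for $I(\rho_{i+1})$ back in repeatedly; after $k$ steps this gives
$$I(r)\leq\ell^{k}I(\rho_{k})+\sum_{i=0}^{k-1}\ell^{i}\Big(A_{1}\big[(1-\tau)\tau^{i}(R-r)\big]^{-\alpha_{1}}+A_{2}\big[(1-\tau)\tau^{i}(R-r)\big]^{-\alpha_{2}}+A_{3}\Big).$$
The decisive step, and the place where the whole technique lives, is the calibration of $\tau$: I would impose $\ell\tau^{-\alpha_{1}}<1$, which is possible precisely because $\ell<1$ (concretely $\tau\in(\ell^{1/\alpha_{1}},1)$ when $\alpha_{1}>0$, and any $\tau\in(0,1)$ when $\alpha_{1}=0$). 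Because $0\leq\alpha_{2}\leq\alpha_{1}$ and $\tau<1$, the same $\tau$ forces $\ell\tau^{-\alpha_{2}}\leq\ell\tau^{-\alpha_{1}}<1$, so each of the three geometric series $\sum_{i}(\ell\tau^{-\alpha_{1}})^{i}$, $\sum_{i}(\ell\tau^{-\alpha_{2}})^{i}$ and $\sum_{i}\ell^{i}$ converges, with sums $(1-\ell\tau^{-\alpha_{1}})^{-1}$, $(1-\ell\tau^{-\alpha_{2}})^{-1}$ and $(1-\ell)^{-1}$ respectively.

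Finally I would send $k\to\infty$. Here the hypothesis that $I$ is \emph{bounded} on $[r,R]$ enters in an essential but cheap way: writing $|I|\leq M$ one has $\ell^{k}I(\rho_{k})\leq\ell^{k}M\to0$, so the remainder term simply drops out, requiring no quantitative control of $I$. Summing the convergent geometric series then produces
$$I(r)\leq\frac{(1-\tau)^{-\alpha_{1}}}{1-\ell\tau^{-\alpha_{1}}}A_{1}(R-r)^{-\alpha_{1}}+\frac{(1-\tau)^{-\alpha_{2}}}{1-\ell\tau^{-\alpha_{2}}}A_{2}(R-r)^{-\alpha_{2}}+\frac{1}{1-\ell}A_{3},$$
which is exactly the asserted estimate once $c$ is taken to be the maximum of the three displayed prefactors. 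I expect the only genuine subtlety to be verifying the admissibility of this $\tau$-choice and the monotonicity $\ell\tau^{-\alpha_{2}}\leq\ell\tau^{-\alpha_{1}}$; everything downstream is bookkeeping of convergent sums. One should note that the middle prefactor depends on $\alpha_{2}$ as well, so strictly the constant is $c(\alpha_{1},\alpha_{2},\ell)$, and it is exactly the inequality $\alpha_{2}\leq\alpha_{1}$ that allows one to record it, as in the statement, essentially in terms of $\alpha_{1}$ and $\ell$.
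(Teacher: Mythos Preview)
Your argument is correct and is precisely the standard hole-filling iteration that establishes this lemma. The paper itself does not supply a proof; it simply quotes the result from Giaquinta \cite[Lemma V.3.1, p.~161]{[Giaquinta]}, and the proof given there is exactly the geometric-radii iteration you describe. One small addendum: your closing remark about the constant can be sharpened. With your choice of $\tau=\tau(\alpha_{1},\ell)$, the three prefactors satisfy
\[
\frac{(1-\tau)^{-\alpha_{2}}}{1-\ell\tau^{-\alpha_{2}}}\leq\frac{(1-\tau)^{-\alpha_{1}}}{1-\ell\tau^{-\alpha_{1}}}\quad\text{and}\quad\frac{1}{1-\ell}\leq\frac{(1-\tau)^{-\alpha_{1}}}{1-\ell\tau^{-\alpha_{1}}},
\]
since $0\leq\alpha_{2}\leq\alpha_{1}$ and $0<\tau<1$; hence the maximum of the three is the first, which depends only on $\alpha_{1}$ and $\ell$, so the constant is genuinely $c(\alpha_{1},\ell)$ as stated.
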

Note that if the pair $\big(u(x,t),\Pi(x,t)\big)$ is a solutions of \eqref{NS}, then, for any $\lambda>0$, the pair $\big(\lambda u(\lambda x,\lambda^{2}t),\lambda^{2}\Pi(\lambda x,\lambda^{2}t)\big)$ also solves \eqref{NS}. Hence, as in \cite{[CKN]},
	we introduce the following dimensionless quantities:
	\be \begin{aligned}&E( r)=r^{-1}\|u\|^{2}_{L^{2,\infty} (Q(r))},
		&E&_{\ast}( r)=r^{-1}\|\nabla u\|^{2}_{L^{2}(Q(r))}
		,  \nonumber\\
		&E_{p}( r)=r^{p-5}\|u\|^{p}_{L^{p}(Q(r))}
		,&P&_{5/4}( r)= r^{-5/4}
		\|\nabla \Pi\|^{5/4}_{L^{5/4}(Q(r))},\nonumber
		\\
		& P_{5/4} ( r )= r^{ -5/2}
		\Big\|\Pi-\overline{\Pi}_{B(r)}\Big\|^{5/4}_{L^{5/4}(Q(r))},
		&P&_{5/3} ( r )= r^{ -5/3}		\Big\|\Pi-\overline{\Pi}_{B(r)}\Big\|^{5/3}_{L^{5/3}(Q(r))}.
		\nonumber
		\ea\ee	
In the spirit of \cite{[RWW]}, we derive some decay estimates involving the scaling invariant quantities, which are helpful in the proof of Proposition \ref{boxprop}.	
		\begin{lemma}\label{lemma2.1}
			For $0<r\leq\f{1}{2}\rho$ and $8/3\leq b\leq6,$
			there is an absolute constant $C$  independent of  $r$ and $\rho$,~ such that
			\begin{align}
				&E_{5/2}( r)\leq  C \left(\dfrac{\rho}{r}\right)^{ 5/4}
				E^{\f{2b-5}{2(b-2)}}( \rho)E^{\f{b}{4(b-2)}}_{\ast}(  \rho)
				+C\left(\dfrac{r}{\rho}\right)
				^{5/2}E^{5/4}( \rho).\label{inter3}
			\end{align}
		\end{lemma}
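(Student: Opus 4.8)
The plan is to estimate $\|u\|_{L^{5/2}(Q(r))}$ by subtracting the spatial mean of $u$ over the \emph{larger} ball. Writing $\overline{u}_{\rho}=\overline{u}_{\rho}(t)$ for the average of $u(\cdot,t)$ over $B(\rho)$ and splitting $u=(u-\overline{u}_{\rho})+\overline{u}_{\rho}$, I would start from
\[
\|u\|_{L^{5/2}(Q(r))}^{5/2}\le C\|u-\overline{u}_{\rho}\|_{L^{5/2}(Q(r))}^{5/2}+C\|\overline{u}_{\rho}\|_{L^{5/2}(Q(r))}^{5/2}.
\]
The second term, being constant in the space variable, is elementary: by Cauchy--Schwarz $|\overline{u}_{\rho}(t)|\le C\rho^{-3/2}\|u(t)\|_{L^{2}(B(\rho))}\le C\rho^{-3/2}\|u\|_{L^{2,\infty}(Q(\rho))}$, so integrating over $Q(r)$ (whose measure is $Cr^{5}$) and multiplying by $r^{-5/2}$ yields precisely the second term $C(r/\rho)^{5/2}E^{5/4}(\rho)$ of the claim. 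Hence all the content lies in the deviation term.

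For the deviation I would combine the energy with Sobolev--Poincar\'{e}. Since $u-\overline{u}_{\rho}$ has zero mean on $B(\rho)$, the scale-invariant bound $\|u-\overline{u}_{\rho}\|_{L^{6}(B(\rho))}\le C\|\nabla u\|_{L^{2}(B(\rho))}$ holds with $C$ independent of $\rho$, and $\|u-\overline{u}_{\rho}\|_{L^{2}(B(\rho))}\le 2\|u\|_{L^{2}(B(\rho))}$. The key tool is then the parabolic mixed-norm interpolation
\[
\|u-\overline{u}_{\rho}\|_{L^{q}(-\rho^{2},0;L^{p}(B(\rho)))}\le C\|u\|_{L^{2,\infty}(Q(\rho))}^{1-\beta}\|\nabla u\|_{L^{2}(Q(\rho))}^{\beta},
\]
valid whenever $\tfrac1q=\tfrac{\beta}{2}$ and $\tfrac1p=\tfrac{1-\beta}{2}+\tfrac{\beta}{6}$ (proved by interpolating $L^{p}$ between $L^{2}$ and $L^{6}$ pointwise in time, then using H\"older in time with exponent forced by $q\beta=2$). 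Because the point $p=q=5/2$, i.e.\ $(1/q,1/p)=(2/5,2/5)$, does \emph{not} lie on the interpolation segment joining $(0,1/2)$ and $(1/2,1/6)$, I would descend from $L^{q}_{t}L^{p}_{x}$ to $L^{5/2}_{x,t}$ on the smaller cylinder $Q(r)$ by H\"older separately in space and in time; this is exactly where the growth factor $(\rho/r)^{5/4}$ appears, since the two H\"older gains combine to $r^{3(2/5-1/p)+2(2/5-1/q)}=r^{1/2}$, and after the prefactor $r^{-5/2}$ one is left with $(\rho/r)^{5/4}$.

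The heart of the matter is choosing $\beta$ so that the exponents on $E(\rho)$ and $E_{\ast}(\rho)$ match the claim. Matching the power of $\|\nabla u\|_{L^{2}(Q(\rho))}$ forces $\tfrac{5\beta}{2}=2B$, i.e.\ $\beta=\tfrac{b}{5(b-2)}$; then the power $\tfrac{5(1-\beta)}{2}$ of $\|u\|_{L^{2,\infty}(Q(\rho))}$ is automatically $2A$, and one recovers $A=\tfrac{2b-5}{2(b-2)}$, $B=\tfrac{b}{4(b-2)}$ with $A+B=5/4$ as required by scaling. I expect the only genuine obstacle to be this bookkeeping, and in particular the observation that one must route the gradient through the endpoint $L^{2}_{t}L^{6}_{x}$ controlled by $\|\nabla u\|_{L^{2}(Q(\rho))}$, rather than through a pointwise-in-time Sobolev estimate: the latter reproduces the correct power of $r$ but the wrong split between $E$ and $E_{\ast}$. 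Finally, the two H\"older steps require $p\ge 5/2$ and $q\ge 5/2$; a direct check gives $p\ge5/2\Leftrightarrow b\le6$ and $q\ge5/2\Leftrightarrow b\ge 8/3$, so the admissible range of $\beta$ reproduces exactly the hypothesis $8/3\le b\le 6$, and since every inequality used is scale-invariant, all constants are independent of $r$ and $\rho$, which closes the estimate.
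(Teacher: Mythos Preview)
Your proof is correct and is essentially the paper's argument: both subtract the mean $\overline{u}_{\rho}$, bound the constant part directly to produce the term $C(r/\rho)^{5/2}E^{5/4}(\rho)$, and control the oscillation by interpolating between $L^{2,\infty}$ and $L^{2}_{t}L^{6}_{x}$ (the latter bounded by $\|\nabla u\|_{L^{2}}$ via Sobolev--Poincar\'e), with H\"older on the small cylinder supplying the factor $(\rho/r)^{5/4}$. The only difference is organizational: the paper first H\"olders in space between $L^{2}(B(r))$ and $L^{b}(B(r))$ and then integrates in time (the time-H\"older step forcing $b\ge 8/3$), whereas you first package the interpolation as a mixed norm $L^{p,q}(Q(\rho))$ and then H\"older-descend to $L^{5/2}(Q(r))$; the two parametrizations correspond via $\beta=b/(5(b-2))$ and give exactly the same admissible range $8/3\le b\le 6$.
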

		\begin{proof}
			Taking advantage of  the H\"older  inequality    and the Poincar\'e-Sobolev  inequality, for any $5/2<b\leq6$, we see that
			\begin{align}
				\int_{B(r)}|u-\bar{u}_{B(\rho)}|^{5/2}dx\leq&
				C\B(\int_{B(r)}|u-\bar{u}_{B(\rho)}|^{2}dx\B)
				^{\f{2b-5}{2(b-2)}}
				\B(\int_{B(r)}|u-\bar{u}_{B(\rho)} |
				^{b}dx\B)^{\f{1}{2(b-2)}}\nonumber\\
				\leq&Cr^{\f{ (6-b)}{4(b-2)}}\B(\int_{B(\rho)}|u|^{2}dx\B)
				^{\f{2b-5}{2(b-2)}}
				\B(\int_{B(\rho)}|\nabla u|^{2}dx\B)^{\f{b}{4(b-2)}}. \nonumber
			\end{align}
By means of  the triangle inequality and the last inequality, we know that
			\begin{align}
				\int_{B(r)}|u|^{5/2}dx \leq& C\int_{B(r)}|u-\bar{u}_{{\rho}}|^{5/2 }dx
				+C\int_{B(r)}|\bar{u}_{{\rho}}|^{5/2} dx \nonumber\\
				\leq& Cr^{\f{ (6-b)}{4(b-2)}}\B(\int_{B(\rho)}|u|^{2}dx\B)
				^{\f{2b-5}{2(b-2)}}
				\B(\int_{B(\rho)}|\nabla u|^{2}dx\B)^{\f{b}{4(b-2)}} \nonumber\\&+
				\f{r^{3} C}{\rho^{\f{15}{4}}}\B( \int_{B(\rho)}|u|^{2}dx\B)^{5/4}.\nonumber
			\end{align}
			Integrating with respect to $s$ from $t-\mu^{2}$ to $t$
  and utilizing the H\"older  inequality again, for any $b\geq8/3$, we get
			\begin{align}
				\iint_{Q(r)}|u|^{5/2}dxds
				\leq& Cr^{\f{5}{4} }\B(\sup_{t-\rho^{2}\leq s\leq t}\int_{B(\rho)}|u |^{2}dx\B)
				^{\f{2b-5}{2(b-2)}}
				\B(\iint_{Q(\rho)}|\nabla u|^{2}dxds\B)^{\f{b}{4(b-2)}} \nonumber\\&+
				C\f{r^{5}}{\rho^{\f{15}{4}}}\B(\sup_{t-\rho^{2 }\leq s\leq t}\int_{B(\rho)}|u|^{2}dx\B)^{5/4},
			\end{align}
Therefore,
			$$
			E_{5/2}( r ) \leq  C \left(\dfrac{\rho}{r}\right)^{5/4}
			E^{\f{2b-5}{2(b-2)}}( \rho )E^{\f{b}{4(b-2)}}_{\ast}( \rho )
			+C\left(\dfrac{r}{\rho}\right)
			^{5/2}E^{5/4}( \rho ).
			$$
					\end{proof}
		In
		the spirit of   [17, Lemma 2.1, p.222], we can apply the interior estimate of harmonic function to establish the following   decay estimate of pressure   $\Pi-\overline{\Pi}_{ B (r)}$.
	Since	the pressure  $\Pi$ is in terms of
		$\nabla \Pi$  in equations \eqref{NS}, as said before, we can employ
		this lemma in the proof of Theorem \ref{boxthm} and Proposition \ref{boxprop}.
		\begin{lemma}\label{presure}
			For $0<r\leq\f{1}{8}\rho$, there exists an absolute constant $C$  independent of $r$ and $\rho$ such that
			\begin{align}
				P_{5/4}( r)
				\leq C\left(\f{\rho}{r}\right)^{5/2}E_{5/2}( \rho)+
				C\left(\f{r}{\rho}\right)^{7/4}P_{5/4}( \rho).\label{ppp} \end{align}
		\end{lemma}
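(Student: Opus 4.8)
The plan is to decompose the pressure at each fixed time into a Calder\'on--Zygmund part controlled by $u$ and a part that is harmonic on the relevant ball, and then to play the $\|u\|_{L^{5/2}}$-bound of the first part against the fast interior decay of the harmonic part. Concretely, fix $s\in(t-\rho^{2},t)$ and choose a cut-off $\eta\in C^{\infty}_{0}(B(\rho))$ with $0\le\eta\le1$ and $\eta\equiv1$ on a neighbourhood of $\overline{B(\rho/2)}$. Using $-\Delta\Pi=\partial_{i}\partial_{j}(u_{i}u_{j})$ I would write $\Pi=p_{1}+\Pi_{h}$ with $p_{1}:=-\partial_{i}\partial_{j}\Gamma\ast(\eta u_{i}u_{j})$ and $\Pi_{h}:=\Pi-p_{1}$, exactly as in Lemma~\ref{lem2}. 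Since $(1-\eta)u_{i}u_{j}$ vanishes on $B(\rho/2)$ and $\Gamma$ is smooth off the origin, $\Pi_{h}$ is harmonic in $B(\rho/2)$.

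The second step is the two constituent estimates. For $p_{1}$ the classical Calder\'on--Zygmund theorem gives $\|p_{1}\|_{L^{5/4}(B(\rho))}\le C\|\eta u_{i}u_{j}\|_{L^{5/4}(\mathbb{R}^{3})}\le C\|u\|^{2}_{L^{5/2}(B(\rho))}$, since $\||u|^{2}\|_{L^{5/4}}=\|u\|^{2}_{L^{5/2}}$. For the harmonic part I would establish that, for any $h$ harmonic in $B(\rho/2)$ and any $r\le\rho/8$,
\[
\|h-\overline{h}_{B(r)}\|_{L^{5/4}(B(r))}\le C\Big(\tfrac{r}{\rho}\Big)^{17/5}\|h-\overline{h}_{B(\rho/2)}\|_{L^{5/4}(B(\rho/2))}.
\]
This follows from the interior gradient estimate $\sup_{B(r)}|\nabla h|\le\frac{C}{\rho}\,\frac{1}{|B(\rho/2)|}\int_{B(\rho/2)}|h-\overline{h}_{B(\rho/2)}|\,dy$ applied to the harmonic function $h-\overline{h}_{B(\rho/2)}$, together with Poincar\'e's inequality $\|h-\overline{h}_{B(r)}\|_{L^{5/4}(B(r))}\le Cr\,|B(r)|^{4/5}\sup_{B(r)}|\nabla h|$ and H\"older; the decay exponent is $n/s+1=3\cdot\frac{4}{5}+1=\frac{17}{5}$ with $n=3$, $s=5/4$.

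The third step assembles the slice estimate. Writing $\Pi-\overline{\Pi}_{B(r)}=(p_{1}-\overline{(p_{1})}_{B(r)})+(\Pi_{h}-\overline{(\Pi_{h})}_{B(r)})$ and using the triangle inequality with the two bounds above, then expressing $\Pi_{h}-\overline{(\Pi_{h})}_{B(\rho/2)}=(\Pi-\overline{\Pi}_{B(\rho/2)})-(p_{1}-\overline{(p_{1})}_{B(\rho/2)})$ and passing from $\overline{\Pi}_{B(\rho/2)}$ on $B(\rho/2)$ to $\overline{\Pi}_{B(\rho)}$ on $B(\rho)$ (each change of mean costing only a constant), I obtain, after absorbing $(r/\rho)^{17/5}\le1$ into the first term,
\[
\|\Pi-\overline{\Pi}_{B(r)}\|_{L^{5/4}(B(r))}\le C\|u\|^{2}_{L^{5/2}(B(\rho))}+C\Big(\tfrac{r}{\rho}\Big)^{17/5}\|\Pi-\overline{\Pi}_{B(\rho)}\|_{L^{5/4}(B(\rho))}.
\]
Raising to the power $5/4$, integrating in $s$ over $(t-r^{2},t)\subset(t-\rho^{2},t)$, and enlarging the time interval to $(t-\rho^{2},t)$ for both nonnegative integrands yields $\|\Pi-\overline{\Pi}_{B(r)}\|^{5/4}_{L^{5/4}(Q(r))}\le C\|u\|^{5/2}_{L^{5/2}(Q(\rho))}+C(r/\rho)^{17/4}\|\Pi-\overline{\Pi}_{B(\rho)}\|^{5/4}_{L^{5/4}(Q(\rho))}$. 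Multiplying by $r^{-5/2}$ and inserting the definitions $E_{5/2}(\rho)=\rho^{-5/2}\|u\|^{5/2}_{L^{5/2}(Q(\rho))}$ and $P_{5/4}(\rho)=\rho^{-5/2}\|\Pi-\overline{\Pi}_{B(\rho)}\|^{5/4}_{L^{5/4}(Q(\rho))}$, the two terms turn into $C(\rho/r)^{5/2}E_{5/2}(\rho)$ and $C(r/\rho)^{7/4}P_{5/4}(\rho)$, which is the asserted inequality. The only delicate point is the bookkeeping in the third step, namely keeping track of which mean is subtracted on which ball and verifying that every auxiliary term reduces either to $\|u\|^{2}_{L^{5/2}(B(\rho))}$ or to $\|\Pi-\overline{\Pi}_{B(\rho)}\|_{L^{5/4}(B(\rho))}$; the harmonic decay estimate is the technical heart but is entirely standard, and the hypothesis $r\le\rho/8$ is precisely what guarantees $r\le(\rho/2)/2$ so that the interior estimate applies and the lower-order term can be absorbed.
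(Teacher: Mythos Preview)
Your proposal is correct and follows essentially the same approach as the paper's proof: split $\Pi$ into a Calder\'on--Zygmund piece $p_{1}$ bounded by $\|u\|^{2}_{L^{5/2}}$ and a harmonic remainder whose oscillation decays like $(r/\rho)^{17/5}$ via the interior gradient estimate, then do the mean-subtraction bookkeeping and integrate in time. The only cosmetic difference is that the paper writes the harmonic remainder explicitly as $\Pi_{2}+\Pi_{3}$ using the cut-off formula (with $\phi\equiv1$ on $B(3\rho/8)$, harmonic on $B(\rho/4)$), whereas you define it directly as $\Pi_{h}=\Pi-p_{1}$ (with $\eta\equiv1$ on $B(\rho/2)$, harmonic on $B(\rho/2)$); the subsequent estimates are identical in structure and in the exponents obtained.
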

		\begin{proof}
Fix a smooth function $\phi$ supported in $B(\rho/2)$ and with value 1 on the ball $B(\f{3}{8}\rho)$. Moreover, there holds   $0\leq\phi\leq1$ and
			$|\nabla\phi |\leq C\rho^{-1},~|\nabla^{2}\phi |\leq
			C\rho^{-2}.$
			
As in Lemma \ref{lem2}, we have
			$$
			\partial_{i}\partial_{i}(\Pi\phi)=-\phi \partial_{i}\partial_{j}\big[u_{i}u_{j}\big]
			+2\partial_{i}\phi\partial_{i}\Pi+\Pi\partial_{i}\partial_{i}\phi
			.$$
			For any $x\in B(\f{3}{8}\rho)$, we deduce from integrations by parts that
			\begin{align}
				\Pi(x)=&\Gamma \ast \{-\phi \partial_{i}\partial_{j}[u_{i}u_{j}]
				+2\partial_{i}\phi\partial_{i}\Pi+\Pi\partial_{i}\partial_{i}\phi
				\}\nonumber\\
				=&-\partial_{i}\partial_{j}\Gamma \ast (\phi [u_{i}u_{j}])\nonumber\\
				&+2\partial_{i}\Gamma \ast(\partial_{j}\phi[u_{i}u_{j}])-\Gamma \ast
				(\partial_{i}\partial_{j}\phi[u_{i}u_{j}])\nonumber\\
				& +2\partial_{i}\Gamma \ast(\partial_{i}\phi \Pi) -\Gamma \ast(\partial_{i}\partial_{i}\phi \Pi)\nonumber\\
				=: &\Pi_1(x)+\Pi_2(x)+\Pi_3(x),\label{pp}
			\end{align}
Thanks to  $\phi(x)=1$ ($x\in B(\rho/4$)), we discover that
			\[
			\Delta(\Pi_2(x)+\Pi_3(x))=0.
			\]
By virtue  of the interior  estimate of harmonic function
			and the  H\"older  inequality, we know that, for every
			$ x_{0}\in B(\rho/8)$,
			$$\ba
			|\nabla (\Pi_2+\Pi_3)(x_{0})|&\leq \f{C}{\rho^{4}}\|(\Pi_2+\Pi_3)\|_{L^{1}(B_{x_{0}}(\rho/8))}
			\\
			&\leq \f{C}{\rho^{4}}\|(\Pi_2+\Pi_3)\|_{L^{1}(B(\rho/4))}\\
			&\leq \f{C}{\rho^{17/5}}\|(\Pi_2+\Pi_3)\|_{L^{5/4}(B(\rho/4))}.
			\ea$$
Consequently,
			$$\|\nabla (\Pi_2+\Pi_3)\|^{5/4}_{L^{\infty}(B(\rho/8))}\leq C \rho^{-17/4}\|(\Pi_2+\Pi_3)\|^{5/4}_{L^{5/4}(B(\rho/4))}.$$
			This together with   the  mean value theorem gives that, for each $r\leq \f{1}{8}\rho$,
			$$\ba
			\|(\Pi_2+\Pi_3)-\overline{(\Pi_2+\Pi_3)}_{B(r)}\|^{5/4}_{L^{5/4}(B(r))}\leq&
			Cr^{3} \|(\Pi_2+\Pi_3)-\overline{(\Pi_2+\Pi_3)}_{B(r)}\|^{5/4}_{L^{\infty}(B(r))}\\
			\leq& C
			r^{17/4} \|\nabla (\Pi_2+\Pi_3)\|^{10/7}_{L^{\infty}(B(\rho/8))}\\
			\leq& C\Big(\f{r}{\rho}\Big)^{17/4}\|(\Pi_2+\Pi_3)\|^{5/4}_{L^{5/4}
				(B(\rho/4))}.
			\ea$$
Sine $(\Pi_2+\Pi_3)-\overline{(\Pi_2+\Pi_3)}_{B(\rho/4)}$ is also a harmonic function  on $B(\rho/4)$, we see that
			$$\ba
			&\|(\Pi_2+\Pi_3)-\overline{(\Pi_2+\Pi_3)}_{B(r)}\|
			^{5/4}_{L^{5/4}(B(r))}
			\\
			\leq & C\Big(\f{r}{\rho}\Big)^{17/4}
			\|(\Pi_2+\Pi_3)-\overline{(\Pi_2+\Pi_3)}_{B(\rho/4)}\|
			^{5/4}
			_{L^{5/4}(B(\rho/4))}.
			\ea$$
The triangle inequality implies that
			$$\ba
			&\|(\Pi_2+\Pi_3)-\overline{(\Pi_2+\Pi_3)}_{B(\rho/4)}\|_{L^{5/4}(B(\rho/4))}\\
			\leq& \|\Pi-\overline{\Pi}_{B(\rho/4)}\|_{L^{5/4}(B(\rho/4))}
			+\|\Pi_1-\overline{\Pi_1}_{B(\rho/4)}\|_{L^{5/4}(B(\rho/4))}
			\\
			\leq& C\|\Pi-\overline{\Pi}_{B(\rho)}\|_{\wblue{L^{5/4}(B(\rho/4))}}
			+C\|\Pi_1\|_{L^{5/4}(B(\rho/4))},
			\ea$$
			which means that
			\begin{align}
				&\|(\Pi_2+\Pi_3)-\overline{(\Pi_2+\Pi_3)}_{B(r)}\|
				^{5/4}_{L^{5/4}(B(r))}\nonumber\\
				\leq& C\Big(\f{r}{\rho}\Big)^{17/4}\Big(\|\Pi-\overline{\Pi}
				_{B(\rho)}\|^{5/4}_{L^{5/4}(B(\rho))}
				+\|\Pi_1\|^{5/4}_{L^{5/4}(B(\rho/4))}\Big).
				\label{p2rou}\end{align}
In view of the classical Calder\'on-Zygmund theorem, we thus infer that
			\begin{align}
				\int_{B(\rho/4)}|\Pi_1(x)|^{5/4}dx
				\leq  C \int_{B(\rho/2)}|u|^{5/2}dx.
				\label{lem2.4.2}
			\end{align}
This also yields, for any  $r\leq \f{1}{8}\rho$,
			\begin{align}
				\int_{B(r)}|\Pi_1(x)|^{5/4}dx \leq C \int_{B(\rho/2)}|u|^{5/2}dx.
				\label{lem2.4.3}\end{align}
Integrating in time  on $(t-r^{2}, t)$ and using the triangle inequality, we conclude using \eqref{p2rou}-\eqref{lem2.4.3}  that
			\begin{align}
				&\iint_{Q(r)}|\Pi-\overline{\Pi}_{B(r)}|^{5/4}dxds\nonumber\\
				\leq& \iint_{Q(r)}|\Pi_1-\overline{\Pi_1}_{B(r)}|^{5/4}dxds+				\iint_{Q(r)}|\Pi_2+\Pi_3-\overline{(\Pi_2+\Pi_3)}_{B(r)}|^{5/4}dxds
				\nonumber\\ \leq & C\iint_{Q(r)}|\Pi_1|^{5/4}dxds+C\Big(\f{r}{\rho}\Big)^{17/4}
\Big(\|\Pi-\overline{\Pi}
				_{B(\rho)}\|^{10/7}_{L^{5/4}(B(\rho))}
				+\|\Pi_1\|^{10/7}_{L^{5/4}(B(\rho/4))}\Big)
				\nonumber\\ \leq &C \iint_{Q(\rho/2)}|u|^{5/2}dxds+C\Big(\f{r}{\rho}\Big)^{17/4}
				\|\Pi-\overline{\Pi}
				_{B(\rho)}\|^{5/4}_{L^{5/4}(B(\rho))},\label{pressuresan}
			\end{align}
			which means \wblue{\eqref{ppp}}.
			The proof of this lemma is completed.
		\end{proof}

		\section{Proof  of Theorem \ref{mainthm}}
		\label{sec3}
		\setcounter{section}{3}\setcounter{equation}{0}
The goal of this section is to prove Theorem \ref{mainthm}.
As said before, it is enough   to show \eqref{leb}.
We state precise proposition 	involving  local energy energy bound \eqref{leb} below.
\begin{prop}\label{lebp} Let $\alpha$ be defined as in \eqref{alpha}. Suppose that $(u,\Pi)$ is a suitable weak solution
to the Navier-Stokes equations in  $Q(R)$. Then there holds, for any $R>0$
\be\ba \label{key ineq}
\|u\|^2_{L^{2,\infty}(Q(R/2))}+\|\nabla u\|^2_{L^{2}(Q(R/2))}& \\ \leq  C  R^{(3\alpha-4)/\alpha}& \|u\|^{2}_{L^{p,q}(Q(R))}\\&+CR^{(3\alpha-5)/(\alpha-1)} \|u\|^{2\alpha/(\alpha-1)}_{L^{p,q}(Q(R))}+CR^{-6}\|\Pi\|^{2}_{L^{1}(Q(R))}.
\ea\ee
\end{prop}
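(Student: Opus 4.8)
The plan is to run a single-step energy estimate on the nested cubes $Q(\sigma)\subset Q(\rho)$ for $R/2\le\sigma<\rho\le R$ and then close it with the iteration Lemma \ref{iter}. Set $I(\sigma):=\|u\|_{L^{2,\infty}(Q(\sigma))}^{2}+\|\nabla u\|_{L^{2}(Q(\sigma))}^{2}$, which by part (1) of Definition \ref{defi} is a bounded nonnegative function of $\sigma$ on $[R/2,R]$. Fix a cut-off $\phi\in C_0^{\infty}(Q(\rho))$ with $0\le\phi\le1$, $\phi\equiv1$ on $Q(\sigma)$, $|\nabla\phi|\le C(\rho-\sigma)^{-1}$ and $|\partial_s\phi|+|\Delta\phi|\le C(\rho-\sigma)^{-2}$. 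Substituting $\phi$ into the local energy inequality \eqref{loc} and taking the supremum in $t$, the left side dominates $I(\sigma)$, while the right side splits into the linear term $\iint|u|^2(\partial_s\phi+\Delta\phi)$, the cubic term $\iint u\cdot\nabla\phi\,|u|^2$, and the pressure term $\iint 2u\cdot\nabla\phi\,\Pi$.

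First I would dispatch the two velocity terms together. Bounding the derivatives of $\phi$ and applying H\"older reduces them to multiples of $\|u\|_{L^3(Q(\rho))}^2$ and $\|u\|_{L^3(Q(\rho))}^3$. Feeding these into Lemma \ref{zc} turns the $L^3$ norms into $\|u\|_{L^{p,q}(Q(\rho))}$ times a sublinear power of the energy, namely $I(\rho)^{(3-\alpha)/3}$ for the linear term and $I(\rho)^{(3-\alpha)/2}$ for the cubic term. Because $\alpha>1$ — which is exactly the hypothesis $2/q+3/p<2$ recorded in \eqref{alpha} — these exponents are strictly below $1$, so Young's inequality peels off an absorbable piece $\epsilon I(\rho)$ and leaves constants proportional to $\|u\|_{L^{p,q}(Q(\rho))}^{2}$ (linear term) and $\|u\|_{L^{p,q}(Q(\rho))}^{2\alpha/(\alpha-1)}$ (cubic term), carrying negative powers of $\rho-\sigma$ and positive powers of $\rho\le R$.

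The pressure term is the crux. Here I would invoke the decomposition of Lemma \ref{lem2}, writing $\Pi=\Pi_1+\Pi_2+\Pi_3$ on a ball slightly larger than $B(\rho)$ so that it is valid on the whole support of $\nabla\phi$. The two local pieces $\Pi_1,\Pi_2$ are paired with $u$ by H\"older ($L^{3/2}$ against $L^{3}$) and, via \eqref{p1estimate}–\eqref{p2estimate}, reduce once more to $\|u\|_{L^3}^3$; Lemma \ref{zc} and Young then route them into the same $\|u\|_{L^{p,q}}^{2\alpha/(\alpha-1)}$ contribution as the cubic term. The genuinely new ingredient is $\Pi_3$: pairing it with $u$ as $L^{2,1}$ against $L^{2,\infty}$ and using \eqref{p3estimate} produces a bound of the form $(\rho-\sigma)^{-\beta}\rho^{\gamma}\|u\|_{L^{2,\infty}(Q(\rho))}\|\Pi\|_{L^1(Q(\rho))}$; since $\|u\|_{L^{2,\infty}(Q(\rho))}^2\le I(\rho)$, Young's inequality again extracts $\epsilon I(\rho)$ and leaves a term proportional to $\|\Pi\|_{L^1(Q(\rho))}^2$. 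This is precisely where the low integrability of the pressure is bought: the missing energy is hidden in the harmonic remainder $\Pi_3$ and gets absorbed against $\|u\|_{L^{2,\infty}}$.

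Finally I would collect the estimates. Choosing $\epsilon$ small enough that the accumulated coefficient $\ell$ of $I(\rho)$ stays below $1$, one reaches, for all $R/2\le\sigma<\rho\le R$, an inequality of the shape $I(\sigma)\le A_1(\rho-\sigma)^{-\alpha_1}+A_2(\rho-\sigma)^{-\alpha_2}+A_3+\ell I(\rho)$, with $A_1,A_2,A_3$ the three constants above (bounded using $\rho\le R$). Lemma \ref{iter} on the interval $[R/2,R]$ then eliminates the $I(\rho)$ term and delivers \eqref{key ineq}, the powers of $R$ emerging from the bookkeeping of the $\rho-\sigma$ and $\rho$ factors through the Young exponents. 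I expect the pressure term to be the main obstacle: it forces both the structural splitting of Lemma \ref{lem2}, to isolate the Calder\'on–Zygmund part from the harmonic tail, and the observation that $\Pi_3$ can be paired with the energy $\|u\|_{L^{2,\infty}}$, which is what permits $\Pi$ to enter only through its $L^1$ norm.
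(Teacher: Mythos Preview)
Your proposal is correct and follows essentially the same route as the paper's proof: local energy inequality with a cut-off, the pressure decomposition of Lemma~\ref{lem2} (with $\Pi_1,\Pi_2$ reduced to $\|u\|_{L^3}^3$ via \eqref{p1estimate}--\eqref{p2estimate} and $\Pi_3$ paired with $\|u\|_{L^{2,\infty}}$ via \eqref{p3estimate}), the interpolation of Lemma~\ref{zc}, Young's inequality to peel off $\ell I(\rho)$, and the iteration Lemma~\ref{iter}. The only technical wrinkle is your choice of radii: with $\phi$ supported in $Q(\rho)$ you need the decomposition \eqref{decompose pk} to hold on all of $B(\rho)$, which forces the outer radius in Lemma~\ref{lem2} to exceed $\rho$ and pushes the right-hand norms outside $Q(\rho)$; the paper avoids this by supporting $\phi$ in $Q\big(\tfrac{r+\rho}{2}\big)$ so that Lemma~\ref{lem2} (with outer radius $\rho$) applies exactly on $\operatorname{supp}\phi$ and all bounds stay inside $Q(\rho)$---a cosmetic adjustment of your intermediate radii fixes this.
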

\begin{proof}Consider $0<R/2\leq r<\f{3r+\rho}{4}<\f{r+\rho}{2}<\rho\leq R$. Let $\phi(x,t)$ be non-negative smooth function supported in $Q(\f{r+\rho}{2})$ such that
$\phi(x,t)\equiv1$ on $Q(\f{3r+\rho}{4})$,
$|\nabla \phi| \leq  C/(\rho-r) $ and $
|\nabla^{2}\phi|+|\partial_{t}\phi|\leq  C/(\rho-r)^{2} .$

By means of H\"older's inequality, we arrive at
\be\ba\int^{t}_{-T }\int_{\mathbb{R}^{3}} |u|^{2}
 (\partial_{s}\phi+\Delta \phi)dxds&\leq \f{C}{(\rho-r)^{2}}\iint_{Q(\f{r+\rho}{2})} |u|^{2}
dxds\\&\leq  \f{C\rho^{5/3}}{(\rho-r)^{2}}\B(\iint_{Q(\rho)} |u|^{3}
dxds\Big)^{2/3}
\\&=:L_{1}.\label{firstterm}\ea\ee
Thanks to the local energy inequality \eqref{loc} and the decomposition of  pressure  in Lemma \ref{lem2}, we know that
\begin{align}\label{loc2}
 &\int_{B(\f{r+\rho}{2})} |u(x,t)|^{2} \phi(x,t) dx
 +2\iint_{Q(\f{r+\rho}{2})}
  |\nabla u|^{2}\phi  dxds  \leq  L_{1} +L_{2}+L_{3}+L_{4}+L_{5},
 \end{align}
where $$\ba L_{2}=\f{C}{(\rho-r)} \iint_{Q(\f{r+\rho}{2})}  |u|^{3} dxds;~\\
L_{3}=
\f{C}{(\rho-r)} \iint_{Q(\f{r+\rho}{2})} u\Pi_{1}dxds;~\\
L_{4}=
\f{C}{(\rho-r)} \iint_{Q(\f{r+\rho}{2})} u\Pi_{2}dxds;~\\
L_{5}=
\f{C}{(\rho-r)} \iint_{Q(\f{r+\rho}{2})} u\Pi_{3}dxds.~\ea$$
By the H\"older inequality and \eqref{p1estimate}-\eqref{p3estimate}, we find that
\begin{align}
&L_{3}\leq \f{C}{(\rho-r)}\|  \Pi_{1}\|_{L^{3/2}(Q(\f{r+\rho}{2}))}\|  u\|_{L^{3}(Q(\f{r+\rho}{2}))}\leq \f{C}{(\rho-r)}  \|u\|^{3}_{L^{3}(Q(\rho))},\label{thierdterm1}\\
&L_{4}\leq \f{C}{(\rho-r)}\|  \Pi_{2}\|_{L^{3/2}(Q(\f{r+\rho}{2}))}\|  u\|_{L^{3}(Q(\f{r+\rho}{2}))}\leq \f{C\rho^{3}}{(\rho-r)^{4}}  \|u\|^{3}_{L^{3}(Q(\rho))},\label{thierdterm2}\\
&L_{5}\leq \f{C}{(\rho-r)}\|  \Pi_{3}\|_{L^{2,1}(Q(\f{r+\rho}{2}))}\|  u\|_{L^{2,\infty}(Q(\f{r+\rho}{2}))}\leq \f{C\rho^{3/2}}{(\rho-r)^{4}} \|  \Pi\|_{L^{1} (Q(\rho))}  \|u\|_{L^{2,\infty}(Q(\rho)))}.\label{thierdterm3}
\end{align}
From \eqref{loc2}-\eqref{thierdterm3}, we see that it is enough to bound $\|u\|^{3}_{L^{3}(Q(\rho))}$. To this end,
plugging \eqref{zw} into \eqref{firstterm} \eqref{thierdterm1} and \eqref{thierdterm2} respectively, by the Young inequality, we conclude that
\begin{align}\nonumber &L_{1}\leq \f{C\rho^{3+2/\alpha}}{(\rho-r)^{6/\alpha}}\|u\|^{2}_{L^{p,q}(Q(\rho))} +\f{1}{5}\B(\|u\|_{L^{2,\infty}(Q(\rho))}^{2}+\|\nabla u\|_{L^{2}(Q(\rho))}^{2}\B),\\
\nonumber &L_{2} +L_{3}
\leq \f{C\rho^{3}}{(\rho-r)^{2/(\alpha-1)}}\|u\|^{2\alpha/(\alpha-1)}_{L^{p,q}(Q(\rho))}
+\f{1}{5}\B(\|u\|_{L^{2,\infty}(Q(\rho))}^{2}+\|\nabla u\|_{L^{2}(Q(\rho))}^{2}\B),\\
\nonumber
&L_{4}\leq \f{C\rho^{3(\alpha+1)/(\alpha-1)}}{(\rho-r)^{8/(\alpha-1)}}\|u\|^{2\alpha/(\alpha-1)}_{L^{p,q}(Q(R))}
+\f{1}{5}\B(\|u\|_{L^{2,\infty}(Q(\rho))}^{2}+\|\nabla u\|_{L^{2}(Q(\rho))}^{2}\B).
\end{align}
In addition, utilizing the Young inequality again, we get
$$L_{5}\leq   \f{C\rho^{3 }}{(\rho-r)^{8}} \|  \Pi\|^{2}_{L^{1} (Q(\rho))} +\f{1}{5} \|u\|^{2}_{L^{2,\infty}(Q(\rho)))}.$$
Collecting all the above estimates, we know that
$$\ba
&\|u\|_{L^{2,\infty}(Q(r))}^{2}+\|\nabla u\|_{L^{2}(Q(r))}^{2}\\\leq
&\f{C\rho^{(3\alpha+2)/\alpha}}{(\rho-r)^{6/\alpha}}\|u\|^{2}_{L^{p,q}(Q(\rho))} +\f{C\rho^{3}}{(\rho-r)^{2/(\alpha-1)}}\|u\|^{2\alpha/(\alpha-1)}_{L^{p,q}(Q(\rho))}\\
&+\f{C\rho^{3(\alpha+1)/(\alpha-1)}}{(\rho-r)^{8/(\alpha-1)}}\|u\|^{2\alpha/(\alpha-1)}_{L^{p,q}(Q(\rho))}
 +\f{C\rho^{3 }}{(\rho-r)^{8}} \|  \Pi\|^{2}_{L^{1} (Q(\rho))} +\f{4}{5}\B(\|u\|_{L^{\infty,2}(Q(\rho))}^{2}+\|\nabla u\|_{L^{2}(Q(\rho))}^{2}\B).
\ea$$
The  iteration Lemma \ref{iter} allows us to derive \eqref{key ineq} from the last inequality.
\end{proof}
		
		\section{Proof  of Proposition \ref{boxprop}}
		\label{sec4}
		\setcounter{section}{4}\setcounter{equation}{0}
		The main part of this sections is the proof of Proposition \ref{boxprop}.
		\begin{proof}[Proof of Proposition \ref{boxprop}]
Along the lines of \cite{[KY6],[WW2],[RWW]},
under the hypotheses of  \eqref{cond}, we select $2\rho<1$ such that $\rho^{\beta}<1/2$, where the parameter $\beta$ is to be
			determined  later   and
			\be\label{assume}
			\iint_{Q (2\rho)}
			|\nabla u |^{2} +| u |^{ 10/3}+|\Pi-\overline{\Pi}_{B(2\rho)} |^{ 5/3}+
			|\nabla \Pi| ^{5/4}dxds \leq    (2\rho)^{ 5/3-\gamma}\varepsilon_{1}.
			\ee
We will make use of the following result
			    \be E(\rho)\leq C\varepsilon_{1}^{3/5}\rho^{-\f{3\gamma}{5}},~~ (\gamma\leq5/12),\label{E}\ee      which is
			shown in \cite{[WW2]}. Here we omit its details, the reader is referred to \cite[Theorem 1.2, p.1768-1769]{[WW2]} for a proof.
			Second, iterating (\ref{ppp}) in Lemma \ref{presure}, we see  that
			\be\label{referee}
			P_{5/4}( \theta^{N}r)\leq C\sum^{N}_{k=1}\theta^{-\f{5}{2}+\f{7(k-1)}{4}}
			E_{5/2}( \theta^{N-k}r)+C\theta^{7N/4}
			P_{5/4}( r).
			\ee
In view of   the Poincar\'e-Sobolev  inequality and the H\"older  inequality, we know that
			\begin{align}
				\nonumber
				\|\Pi-\overline{\Pi}_{ B(r)}\|_{L^{5/4}(Q( r))} &\leq
				\|\Pi-\overline{\Pi}_{ B(r)}\|^{3/8}_{L^{5/4,15/7}(Q( r))} \|\Pi-\overline{\Pi}_{ B(r)}\|^{5/8}_{L^{5/4,1}(Q( r))}\nonumber\\
				&\leq
				Cr\|\nabla \Pi \|^{3/8}_{L^{5/4}(Q( r))} \|\Pi-\overline{\Pi}_{ B(r)}\|^{5/8}_{L^{5/3}(Q( r))},
				\label{pressureinti}\end{align}
			which in turn implies that
			$$
			P_{5/4}( r)\leq CP^{3/8}_{5/4}( r)
			P^{ 15/32}_{5/3}( r).
			$$
			Inserting this  inequality into \eqref{referee}, we have
			\be\label{refer}
			P_{5/4}( \theta^{N}r)\leq C\sum^{N}_{k=1}\theta^{-\f{5}{2}+\f{7(k-1)}{4}}
			E_{5/2}( \theta^{N-k}r)+C\theta^{7N/4}
			P^{3/8}_{5/4}( r)
			P^{ 15/32}_{5/3}( r).
			\ee
			Before going   further, we introduce some notations  $r=\rho^{\alpha}=\theta^{N}r$,~$ \theta=\rho^{\beta}$, ~$r_{i}=r =\theta^{-i}r=\rho^{\alpha-i\beta}(1\leq i\leq N)$, where $\alpha$ and $\beta$ are determined by $\gamma$.
			Their precise selections will be given
			in the end.
			As a consequence, by $E_{5/2}(u,r)\leq C\theta^{-\frac{5}{2}}E_{5/2}(u,\theta^{-1}r)$ and \eqref{refer}, we infer that
			\be\ba
			&P_{5/4}( r)+E_{5/2}( r) \\
			\leq&			C\sum^{N}_{{k=1}}\theta^{-\f{5}{2}+\f{7(k-1)}{4}}
			E_{5/2}( r_{k}) +C\theta^{7N/4}P^{3/8}_{5/4}
			( r_{N})
			P^{15/32}_{5/3}( r_{N})\\
			:=&\text{I}+\text{II}.\label{key}
			\ea\ee
In the light of \eqref{special}, it suffices to prove that there exists a constant $\sigma>0$ such that $P_{5/4}( \sigma)+E_{5/2}( \sigma)<\varepsilon_0$.
For this purpose,
			we   employ \eqref{inter3} with  $b=8/3$  in Lemma \ref{lemma2.1}, \eqref{E} and   \eqref{assume} to conclude
			$$\ba
			E_{5/2}( r_{k})&\leq C \Big(\f{\rho}{ r_{k}}\Big)^{\f{5}{4}}E^{1/4}( \rho)
			E_{\ast} ( \rho)+C\Big(\f{ r_{k}}{\rho}\Big)^{5/2}E^{5/4}( \rho)\\
			&\leq C\varepsilon_{1}^{3/4} \Big( \rho^{\f{23}{12}-\f{5}{4}(\alpha-k\beta)-\f{23\gamma}{20}}
			+\rho^{\f{5}{2}\alpha-\f{5}{2}-\f{5}{2}k\beta-\f{3\gamma}{4}} \Big).
			\ea$$
			Inserting  this inequality into I, we find that
			$$\ba
			I&\leq C\varepsilon_{1}^{6/7}\sum^{N}_{k=1}\B(\rho^{-\f{17\beta}{4}+  3k\beta -\f{5\alpha}{4}-\f{23\gamma}{20}+\f{23}{12}}
			+\rho^{-\f{17\beta}{4}+\f{5\alpha}{2}-\f{5}{2}-\f{3 k\beta}{4}-\f{3\gamma}{4}} \B).\ea $$
			To minimise the righthand side of this inequality, we choose
			\be\label{aerfa}
			\alpha=\f{4}{15}(3\beta +\f{53}{12}-\f{2\gamma}{5}+\f{3 N\beta}{4}).
			\ee
Hece, for sufficiently large $N$, there holds
			\be\ba \label{key1}
			\text{I} &\leq C\varepsilon_{1}^{6/7}\B(
			\rho^{-\f{5\beta}{4}-\f{5\alpha}{4}-\f{23\gamma}{20}+\f{23}{12}}
			+\rho^{-\f{17\beta}{4}+\f{5\alpha}{2}-\f{5}{2}-\f{3 N\beta}{4}-\f{3\gamma}{4}}\B)
			\\
			&\leq C\varepsilon_{1}^{6/7} \rho^{-\f{9\beta}{4}+\f{4}{9}
				-\f{61\gamma}{60}-\f{N\beta}{4}}.
			\ea \ee
To bound II, assume for a while there holds  $r_{N}\leq \rho$, that is
			\be
			\rho^{\alpha-N\beta}\leq\rho.\label{c3}
			\ee
Using the bounds \eqref{assume}
			and \eqref{aerfa}, we have the estimate
			\be\ba\label{key2}
			\text{II} \leq&  C\rho^{\f{7N\beta}{4}} r_{N}^{-\f{5}{4}}
			\B(\iint_{Q(r_{N})}|\nabla \Pi|^{5/4}dxds\B)^{3/8}
			\B( \iint_{Q(r_{N})}|\Pi-\overline{\Pi}_{2\rho}|^{5/3}dxds\B)^{15/32}\\
			\leq&  C\rho^{\f{7N\beta}{4}}r_{N}^{-\f{5}{4}}
			\B(\iint_{Q(2\rho)}|\nabla \Pi|^{5/4}dxds\B)^{3/8}
			\B(\iint_{Q(2\rho)}|\Pi-\overline{\Pi}_{2\rho}|^{5/3}dxds\B)^{15/32}\\
			\leq&  C\rho^{\f{11 N\beta }{4}-\f{19}{288} -\f{341\gamma}{480}-\beta}\varepsilon_{1}^{27/32}.
			\ea\ee
To gnarantee that $
			\text{I}+\text{II}
			\leq C\varepsilon_{1}^{3/4}\leq \varepsilon_0$,
			we need  $-\f{9\beta}{4}+\f{4}{9}
			-\f{61\gamma}{60}-\f{N\beta}{4}\geq 0$ and $\f{11 N\beta }{4}-\f{19}{288} -\f{341\gamma}{480}-\beta\geq 0$. In addition, we have derived
\eqref{key2} assuming that \eqref{c3}, so, we also require
  $\alpha-N\beta-1\geq0.$ Now, we collect all the restrictions of $\gamma$ below
			\be\label{last1}
			\gamma\leq \min\Big\{
			\f{5(16-9N\beta-81\beta)}{183},
			\f{5(2-9N\beta+9\beta)}{6},
			\f{5(792N\beta-19-288\beta)}{1023},\f{5}{12}
			\Big\}.
			\ee
			Maximising this bound on $\gamma$ with respect to $N\beta$, we discover that $N\beta=245/1903$. Furthermore, We deduce using \eqref{last1} that
			$$
			\beta=\f{245}{1903N}\leq\f{183}{405}\B(\f{2315}{5709}-\gamma\B).
			$$
			Hence, choosing $\beta$ sufficiently small by selecting $N$
			sufficiently large,
			we can have any
			$\gamma<2315/5709$.
			Then, we pick $\alpha=\f{4}{15}(3\beta -\f{2\gamma}{5}+\f{25766}{5709})$. We derive from
			  \eqref{key}, \eqref{key1} and \eqref{key2} that
			$$
			P_{10/7}(\sigma)+E_{20/7}(\sigma)
			\leq C\varepsilon_{1}^{3/4}< \varepsilon_0, $$
			with $\sigma=\rho^{\alpha}$. By \eqref{special}, finally, we see that $u \in L^{\infty}(Q(\sigma/2))$.
			This completes the proof of Proposition \ref{boxprop}.
		\end{proof}

		%\section{Proofs of Theorems \ref{the1.2}, \ref{the1.3} and \ref{the1.4}}
		%\label{sec4}

		%\section{Proofs of Theorems \ref{the1.2}, \ref{the1.3} and \ref{the1.4}}
		%\label{sec4}

		\section*{Acknowledgement}
		The research of Wang was partially supported by  the National Natural		Science Foundation of China under grant No. 11601492 and the
the Youth Core Teachers Foundation of Zhengzhou University of
Light Industry.
			The research of Zhou is
supported in part by the China Scholarship Council for one year study at Mathematical
Institute of University of Oxford and Doctor Fund of Henan Polytechnic University (No.
B2012-110).

	\end{document}